\newtheorem{theorem}{Theorem}[section]
\newtheorem{proposition}[theorem]{Proposition}
\newtheorem{lemma}[theorem]{Lemma}
\begin{document}

\title[Generalized heaps]{Generalized heaps, inverse semigroups and Morita equivalence}

\author{M.~V.~Lawson}
\address{Department of Mathematics and the Maxwell Institute for Mathematical Sciences\\
Heriot-Watt University\\
Riccarton\\
Edinburgh~EH14~4AS\\
Scotland}
\email{M.V.Lawson@ma.hw.ac.uk}

\keywords{Inverse semigroup, Morita equivalence, generalized heaps}

\subjclass{20M18}

\begin{abstract} Inverse semigroups are the abstract counterparts of pseudogroups of transformations.
The abstract counterparts of atlases in differential geometry are what Wagner termed `generalized heaps'.
These are sets equipped with a ternary operation satisfying certain axioms.
We prove that there is a bijective correspondence between generalized heaps and the equivalence bimodules, defined by Steinberg.
Such equivalence bimodules are used to define the Morita equivalence of inverse semigroups.
This paper therefore shows that the Morita equivalence of inverse semigroups is determined by Wagner's generalized heaps.
 \end{abstract}

\maketitle

\section{Introduction}

Inverse semigroups originated as the algebraic counterparts of pseudogroups of transformations in differential geometry.
We refer the reader to \cite{L2} for more information on this and for all undefined terms from inverse semigroup theory.
V.~V.~Wagner\footnote{This name is usually transliterated as `Vagner' in the literature,
but we understand that `Wagner' was his preferred transliteration.}, one of the founders of the field, was one of the few who continued to seek inspiration from this source. 
In differential geometry, pseudogroups are usually not studied on their own but in combination with the notion of an atlas.
Just as Wagner defined inverse semigroups to be the algebraic versions of pseudogroups, 
so too he defined a class of structures, called {\em generalized heaps}\footnote{Sometimes the original Russian word used for heap,
`groud', is used   }, to be the algebraic versions of atlases.
To understand his definition, we briefly recall the essence of the usual definition of an atlas from differential geometry.

Given two spaces $X$ and $Y$, an {\em atlas} 
$A$ from $X$ to $Y$ is a set of partial bijections
such that the union of their domains is $X$ and the union of their images is $Y$.
The set $T = A^{-1}A$ is a collection of partial bijections defined on $X$, and $S = AA^{-1}$ is a collection of
partial bijections defined on $Y$.
For example, if $X$ and $Y = \mathbb{R}^{n}$ are topological spaces, $A$ consists of homeomorphisms,
and $S$ is the pseudogroup of all smooth maps defined between open subsets of $Y$ then the atlas $A$ defines
the structure of a differential manifold on $X$.
Other such local structures can be defined in a similar way.

This concrete notion of an atlas can be made algebraic.
Observe that if $x,y,z \in A$, an atlas, then also $xy^{-1}z \in A$, as long as $A$ is sufficiently large.
Define a ternary operation on the set $A$ by 
$$\{xyz\} = xy^{-1}z.$$
One may seek to axiomatise the resulting structure and relate it back, in the spirit of Cayley's theorem, to the original concrete notion of an atlas.
This was what Wagner did.
The result is a set $A$ equipped with a ternary operation $(x,y,z) \mapsto \{xyz\}$ satisfying the following axioms:
\begin{description}

\item[{\rm (A1)}] $\{xxx\} = x$.

\item[{\rm (A2)}] $\{\{x_{1}x_{2}x_{3}\}x_{4}x_{5}\} = \{x_{1}\{x_{4}x_{3}x_{2}\}x_{5}\} = \{x_{1}x_{2}\{x_{3}x_{4}x_{5}\}\}$.

\item[{\rm (A3)}] $\{xx\{yyz\}\} = \{yy\{xxz\}\}$.

\item[{\rm (A4)}] $\{\{zxx\}yy\} = \{\{zyy\}xx\}$.

\end{description}
Such structures are called {\em generalized heaps}.
The axiomatization above is taken from \cite{BS2} a translation of \cite{BS1}.

This aspect of the work by Wagner and his school did not become well known outside of Eastern Europe for a variety of reasons:
mathematically, generalized heaps are unusual in being based on a ternary operation rather than a binary one;
more substantively, the theory of generalized heaps appears to be tangential to the main theory of inverse semigroups,
and gives the appearance of being nothing more than a generalization for generalization's sake;
finally, the theory was developed at a time when political tensions between East and West impeded the dissemination of ideas.
Whatever the reasons, although Wagner is one of the founding fathers of inverse semigroup theory, 
the details of this particular aspect of his work have been largely forgotten.
One exception was in synthetic differential geometry \cite{AK1}.

However, in recent years, references to Wagner's work in this area have started to reappear \cite{BK,Gr}.
This paper has the goal of doing more by showing that generalized heaps are in fact a central component of inverse semigroup theory:
they are precisely what is needed to define the Morita equivalence of inverse semigroups.

Specifically, we prove that the equivalence bimodules used to define Morita equivalence of inverse semigroups in \cite{S}
are in bijective correspondence with generalized heaps.
In other words, equivalence bimodules are to classical atlases as inverse semigroups are to pseudogroups.
Far from being tangential to inverse semigroup theory or a generalization for generalization's sake,
generalized heaps are the mathematical devices which witness a Morita equivalence between two inverse semigroups.
This has added interest in the light of the connection between inverse semigroups and \'etendues \cite{F,F1,L4}.

The material in Lemma~3.2 is due to Wagner and is described in \cite{W3}.
The main tool we use is that of a {\em pregroupoid} which is due to Anders Kock \cite{AK2} with origins in \cite{AK1}
combined with the Ehresmann-Schein-Nambooripad theorem described in \cite{L2}.
A small sample of Wagner's work in this area can be found in \cite{W1,W2,W3,W4}.
In addition to whatever mathematical merit this paper may have,
we also hope that it will serve as inspiration for a deeper study of Wagner's {\em oeuvre}:
not just as an important ingredient in the history of semigroup theory,
but also as part of the development of differential geometry in the twentieth century. \\

\noindent
{\bf Acknowledgements }This paper arose in the course of writing \cite{FLS};
I am grateful to my co-authors, Jonathon Funk and Benjamin Steinberg, for their inspiring ideas on this topic.
I would also like to thank Grigori Zhitomirskii for sending me a copy of \cite{W3}.
Finally, I am grateful to the referee for suggesting the additional references \cite{HS}, \cite{BS} and \cite{SS},
and that I include the example at the end of the paper.\\

\section{From equivalence bimodules to generalized heaps}\setcounter{theorem}{0}

In this section, we prove the easy direction and show that each equivalence bimodule gives rise to a generalized heap.

If $S$ is an inverse semigroup and $X$ a set then a {\em left action} of $S$ on $X$ is a function
$S \times X \rightarrow X$, denoted by $(s,x) \mapsto s \cdot x$, such that $(st) \cdot x = s \cdot (t \cdot x)$.
We shall usually write $sx$ rather than $s \cdot x$.
Such an action is said to be {\em unitary} if $SX = X$.
If there is a unitary action of $S$ on $X$ then we say that  $X$ is a {\em left $S$-module}.
Right actions and right $S$-modules are defined dually.
If $S$ and $T$ are inverse semigroups and $X$ is both a left $S$-module and a right $T$-module
and $(s \cdot x) \cdot t = s \cdot (x \cdot t)$ for all $s \in S$, $t \in T$ and $x \in X$
then we say that $X$ is an {\em $(S,T)$-bimodule}.

The following definition is due to Steinberg \cite{S}.
Let $S$ and $T$ be inverse semigroups.
An {\em equivalence bimodule} for $S$ and $T$ consists of an $(S,T)$-bimodule equipped with surjective functions
$$\langle -,- \rangle \colon \: X \times X \rightarrow S
\text{ and }
[-,-] \colon \:X \times X \rightarrow T$$
such that the following axioms hold, where $x,y,z \in X$ and $s \in S$ and $t \in T$:
\begin{description}

\item[{\rm (MC1)}] $\langle sx,y \rangle = s\langle x,y \rangle$.

\item[{\rm (MC2)}] $\langle y,x \rangle = \langle x,y \rangle^{-1}$.

\item[{\rm (MC3)}] $\langle x,x \rangle x = x$.

\item[{\rm (MC4)}] $[x,yt] = [x,y]t$.

\item[{\rm (MC5)}] $[x,y] = [y,x]^{-1}$.

\item[{\rm (MC6)}] $x[x,x] = x$.

\item[{\rm (MC7)}] $\langle x,y \rangle z = x [y,z]$.

\end{description}
If $S$ and $T$ have such an equivalence bimodule they are said to be {\em strongly Morita equivalent}. 
It was shown in \cite{FLS} that this is the correct definition of Morita equivalence for inverse semigroups:
in particular, it dovetails well with the Morita theory of topological groupoids and $C^{\ast}$-algebras \cite{P}.

Each inverse semigroup $S$ gives rise to an equivalence bimodule in a simple way.
We put $X = S$ and we let $S$ act on $X$ on the left and right by left and right multiplication.
Both actions are clearly unitary and so $X$ is an $(S,S)$-bimodule.
If we define $\langle -,- \rangle \colon \: X \times X \rightarrow S$ by $\langle x,y \rangle = xy^{-1}$ and
$[-,-] \colon \:X \times X \rightarrow T$ by $[x,y] = x^{-1}y$,
then  $(S,S,X,\langle -,- \rangle,[-,-])$ with these definitions is an equivalence bimodule.
We shall denote this equivalence bimodule by $EB(S)$.

We need to recall some results from \cite{S} which we prove for the sake of completeness.

\begin{lemma} Let $(S,T,X,\langle -,- \rangle,[-,-])$ be an equivalence bimodule.
\begin{enumerate}

\item For each $x \in X$ both $\langle x, x \rangle$ and $[x,x]$ are idempotents.

\item Define the relation $\leq_{S}$ on $X$ by 
$$x \leq_{S} y \Leftrightarrow x = \langle x,x\rangle y.$$
Then $\leq_{S}$ is a partial order on $X$.

\item  Define the relation $\leq_{T}$ on $X$ by 
$$x \leq_{T} y \Leftrightarrow x = y[x,x].$$
Then $\leq_{T}$ is a partial order on $X$.

\item The two orders $\leq_{S}$ and $\leq_{T}$ coincide.

\end{enumerate}
\end{lemma}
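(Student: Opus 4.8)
The plan is to prove the four parts in order, each feeding into the next, and to lean throughout on the single structural fact that idempotents of an inverse semigroup commute. Part (1) is immediate: reading (MC1) from right to left and then applying (MC3) gives $\langle x,x\rangle\langle x,x\rangle = \langle \langle x,x\rangle x,x\rangle = \langle x,x\rangle$, so $\langle x,x\rangle$ is idempotent, and the mirror-image computation with (MC4) and (MC6) shows $[x,x][x,x] = [x,x[x,x]] = [x,x]$. (Note also that (MC2) with $y=x$ already gives $\langle x,x\rangle^{-1}=\langle x,x\rangle$, and dually for $[x,x]$.)

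Before touching the orders I would isolate the one identity that does the real work, a conjugation formula for the support. Applying (MC1) to the first slot and then the consequence $\langle x,sx\rangle = \langle x,x\rangle s^{-1}$ of (MC1) and (MC2) to the second yields $\langle sx,sx\rangle = s\langle x,x\rangle s^{-1}$. Feeding in $s=\langle x,x\rangle$ via the relation $x=\langle x,x\rangle y$, and collapsing the resulting product of commuting idempotents, gives the crucial order fact $\langle x,x\rangle = \langle x,x\rangle\langle y,y\rangle$; that is, $\langle x,x\rangle \leq \langle y,y\rangle$ among the idempotents of $S$ whenever $x \leq_{S} y$.

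With this in hand part (2) is routine. Reflexivity is exactly (MC3). For transitivity, if $x=\langle x,x\rangle y$ and $y=\langle y,y\rangle z$ then $x = \langle x,x\rangle\langle y,y\rangle z = \langle x,x\rangle z$ by the displayed idempotent inequality. For antisymmetry, $x\leq_{S}y$ and $y\leq_{S}x$ force $\langle x,x\rangle \leq \langle y,y\rangle$ and $\langle y,y\rangle \leq \langle x,x\rangle$, hence $\langle x,x\rangle = \langle y,y\rangle$, whereupon $y = \langle y,y\rangle x = \langle x,x\rangle x = x$ by (MC3). Part (3) is the verbatim left–right dual, replacing $\langle -,-\rangle$ by $[-,-]$, the axioms (MC1)--(MC3) by (MC4)--(MC6), and the conjugation formula by its dual $[xt,xt] = t^{-1}[x,x]t$.

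The crux, and the only place the linking axiom (MC7) is needed, is part (4); I expect this to be the main obstacle. To show $x\leq_{S}y \Rightarrow x\leq_{T}y$, I would first use $x=\langle x,x\rangle y$ together with the idempotent inequality to compute $\langle x,y\rangle = \langle x,x\rangle\langle y,y\rangle = \langle x,x\rangle$, hence $\langle y,x\rangle = \langle x,x\rangle$ by (MC2). Then (MC7) gives $y[x,x] = \langle y,x\rangle x = \langle x,x\rangle x = x$ by (MC3), which is precisely $x\leq_{T}y$. The reverse implication is the dual computation: from $x=y[x,x]$ one obtains $[x,y]=[x,x]$ via (MC4) and (MC5), and then (MC7) in the form $\langle x,x\rangle y = x[x,y] = x[x,x] = x$ by (MC6). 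No individual step here is deep, but the difficulty is conceptual rather than computational: one must recognize that (MC7) is exactly the device that rewrites the left-hand support condition as the right-hand one, and the book-keeping with (MC2) and (MC5) needed to align the arguments of the two pairings is where care is required.
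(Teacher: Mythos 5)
Your proof is correct; each step and axiom citation checks out. It is, however, organized around a different key lemma than the paper's. The paper's pivot for part (2) is the observation that $x = ey$ for \emph{any} idempotent $e$ of $S$ already forces $x \leq_S y$; reflexivity and antisymmetry are then direct, and transitivity follows because $x \leq_S y \leq_S z$ gives $x = \langle x,x\rangle\langle y,y\rangle z$, an idempotent multiple of $z$. For part (4) the paper runs a single computational string with $e = \langle x,x\rangle$, namely $y[x,x] = y[ey,ey] = \langle y,ey\rangle ey = e\langle y,y\rangle ey = \langle y,y\rangle ey = ey = x$, using (MC7) then (MC1), and leaves the converse to duality. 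Your pivot is instead the conjugation formula $\langle sx,sx\rangle = s\langle x,x\rangle s^{-1}$ and the resulting monotonicity $x \leq_S y \Rightarrow \langle x,x\rangle\langle y,y\rangle = \langle x,x\rangle$ of the support map. This buys a slightly cleaner transitivity (no auxiliary observation needed), antisymmetry via equality of supports, and a version of part (4) in which the only use of (MC7) is the single conversion $y[x,x] = \langle y,x\rangle x = \langle x,x\rangle x = x$ once you have identified $\langle y,x\rangle = \langle x,x\rangle$; you also prove both directions of (4) explicitly rather than appealing to symmetry. What the paper's route buys in exchange is the reusable characterization of $\leq_S$ as ``$x$ is an idempotent multiple of $y$'', which is marginally stronger than the definition as stated. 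Both arguments rest on exactly the same ingredients --- (MC1)--(MC3), (MC7), and the fact that idempotents of an inverse semigroup commute --- so the difference is one of decomposition rather than substance, and both are valid.
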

\begin{proof}

(1) We prove that $[x,x]$ is an idempotent; the fact that $\langle x, x \rangle$ is an idempotent follows by symmetry.
We have that
$$[x,x][x,x] = [x x[x,x]] = [x,x]$$
by (MC4) and (MC6).

(2) We observe first that if $x = ey$ where $e$ is an idempotent then $x \leq_{S} y$. 
This is because $ex = x$ and so
$$\langle x, x \rangle y 
= \langle ex, ex \rangle y 
= e \langle x, x \rangle e y 
= \langle x,x \rangle ey 
=\langle x,x \rangle x = x$$ 
where we have used (MC1), (MC2) and the fact that $e$ and $\langle x,x \rangle$ are idempotents.
It follows that $x \leq_{S} y$, as claimed.

By (MC3), the relation $\leq_{S}$ is reflexive.
Suppose that $x \leq_{S} y$ and $y \leq_{S} x$.
Then $x = \langle x, x \rangle y$ and $y = \langle y,y \rangle x$.
Thus
$$x = \langle x, x \rangle y 
= \langle x, x \rangle \langle y,y \rangle x
= \langle y, y \rangle \langle x,x \rangle x
=  \langle y, y \rangle x
= y$$
using the fact by (1) that $\langle x, x \rangle$ and  $\langle y,y \rangle$ are idempotents
and that idempotents commute since $S$ is an inverse semigroup.
We have therefore shown that $\leq_{S}$ is antisymmetric.
Finally, let $x \leq_{S} y$ and $y \leq_{S} z$.
Then $x = \langle x, x\rangle \langle y, y \rangle z$.
But this is just $x = ey$ where $e$ is some idempotent and so $x \leq_{S} y$ by our first observation.

(3) This follows by symmetry from (2) above.

(4) Suppose that $x \leq_{S} y$. Then $x = \langle x,x\rangle y$. Put $e = \langle x,x\rangle$ to simplify notation in the calculation that follows.
We calculate
$$y[x,x] 
= y[ey,ey] 
= \langle y, ey \rangle ey 
=  e\langle y, y \rangle ey 
= \langle y, y \rangle ey 
= ey = x$$
where we have used (MC7), (MC1) and the fact that $e$ and $\langle y, y \rangle$ are idempotents.
\end{proof}

In the light of (4) above, we denote $\leq_{S}$ and $\leq_{T}$ by $\leq$.

We may now prove our first main result.

\begin{proposition} Let $(S,T,X,\langle -,- \rangle,[-,-])$ be an equivalence bimodule.
On the set $X$ define a ternary operation
$$\{xyz\} = \langle x,y \rangle z.$$
Then $(X,\{\})$ is a generalized heap.
\end{proposition}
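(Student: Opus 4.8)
The plan is to verify the four axioms (A1)--(A4) one at a time by unravelling the definition $\{xyz\} = \langle x,y \rangle z$ and appealing to the bimodule axioms (MC1)--(MC7) together with Lemma~2.1. Axiom (A1) is immediate, since $\{xxx\} = \langle x,x \rangle x = x$ by (MC3).

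For (A2) I would translate all three expressions into forms involving only $\langle -,- \rangle$ and the left action, and show that each collapses to the common value $\langle x_{1},x_{2} \rangle \langle x_{3},x_{4} \rangle x_{5}$. The outer two equalities are routine: expanding $\{\{x_{1}x_{2}x_{3}\}x_{4}x_{5}\}$ and using (MC1) to pull $\langle x_{1},x_{2} \rangle$ out of the first slot gives $\langle x_{1},x_{2} \rangle \langle x_{3},x_{4} \rangle x_{5}$, while $\{x_{1}x_{2}\{x_{3}x_{4}x_{5}\}\}$ yields the same expression directly from associativity of the left action. The middle term $\{x_{1}\{x_{4}x_{3}x_{2}\}x_{5}\}$ is the one requiring care: after expanding one must evaluate $\langle x_{1}, \langle x_{4},x_{3} \rangle x_{2} \rangle$, where the inner-product factor sits in the \emph{second} argument. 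Since (MC1) only handles the first argument, I would apply (MC2) to flip the two arguments, use (MC1) on the resulting first argument, and then apply (MC2) again to each of the two inverted factors, recovering $\langle x_{1},x_{2} \rangle \langle x_{3},x_{4} \rangle$ as required.

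Axiom (A3) reduces to a commutativity statement: the two sides become $\langle x,x \rangle \langle y,y \rangle z$ and $\langle y,y \rangle \langle x,x \rangle z$, and these agree because $\langle x,x \rangle$ and $\langle y,y \rangle$ are idempotents of $S$ by Lemma~2.1(1) and idempotents commute in an inverse semigroup.

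Axiom (A4) is the step I expect to be the main obstacle, since it is the only one whose two sides cannot be compared entirely within the $S$-valued inner product; it must cross over to the $T$-valued one. Using (MC1) the sides reduce to $\langle z,x \rangle \langle x,y \rangle y$ and $\langle z,y \rangle \langle y,x \rangle x$. The crucial move is (MC7), which rewrites $\langle x,y \rangle y = x[y,y]$ and $\langle z,x \rangle x = z[x,x]$; applying it together with the bimodule compatibility $(s \cdot w) \cdot t = s \cdot (w \cdot t)$ turns the left-hand side into $z[x,x][y,y]$, and symmetrically the right-hand side into $z[y,y][x,x]$. These are equal because $[x,x]$ and $[y,y]$ are idempotents of $T$ by Lemma~2.1(1) and idempotents commute in $T$. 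Thus the whole verification hinges on (MC7) as the bridge between the left and right structures, with commutativity of idempotents doing the closing work in both (A3) and (A4).
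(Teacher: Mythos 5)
Your proposal is correct and follows essentially the same route as the paper's proof: verify (A1) via (MC3), reduce both outer terms of (A2) to $\langle x_{1},x_{2}\rangle\langle x_{3},x_{4}\rangle x_{5}$ via (MC1) and handle the middle term by the (MC2)--(MC1)--(MC2) flip, settle (A3) by commutativity of the idempotents $\langle x,x\rangle$, $\langle y,y\rangle$, and settle (A4) by using (MC1), (MC7) and bimodule compatibility to rewrite both sides as $z[x,x][y,y]$ and $z[y,y][x,x]$. No gaps; this matches the paper's argument step for step.
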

\begin{proof} 

(A1) holds. We have that $\{xxx\} = \langle x, x\rangle x = x$ by (MC3).

(A2) holds. By definition and (MC1), we have that
$$\{\{x_{1}x_{2}x_{3}\}x_{4}x_{5}\} = \langle \langle x_{1}, x_{2} \rangle x_{3}, x_{4} \rangle x_{5}
= \langle x_{1},x_{2} \rangle \langle x_{3}, x_{4} \rangle x_{5};$$
by (MC1) and (MC2), we have that
$$\{x_{1}, \{x_{4}x_{3}x_{2}\}x_{5}\} = \langle x_{1}, \langle x_{4}, x_{3} \rangle x_{2} \rangle x_{5}
= \langle x_{1}, x_{2} \rangle \langle x_{4}, x_{3} \rangle^{-1}x_{5}$$
which is equal to
$$\langle x_{1},x_{2} \rangle \langle x_{3}, x_{4} \rangle x_{5};$$
and, finally,
$$\{x_{1}x_{2}\{x_{3}x_{4}x_{5}\}\}
=
\langle x_{1}, x_{2} \rangle \langle x_{3}, x_{4} \rangle x_{5}$$
where we have used the associativity of the action.


(A3) holds. We have that
$$\{xx \{yyz\} \} = \langle x, x \rangle \langle y, y \rangle z$$
whereas
$$\{yy\{xxz\}\} = \langle y,y \rangle \langle x, x\rangle z.$$
These two elements are equal because $\langle x, x \rangle$ and $\langle y, y \rangle$ are idempotents and so commute.

(A4) holds. We have that
$$\{ \{zxx\}yy\} = z[x,x][y,y]$$
whereas
$$\{ \{zyy\}xx\} = z[y,y][x,x]$$
using (MC1) and (MC7).
These two elements are equal because $[x,x]$ and $[y,y]$ are idempotents and so commute.
\end{proof}

We have seen that from each inverse semigroup $S$ we can construct an equivalence bimodule $EB(S)$.
The generalized heap constructed from $EB(S)$ by Proposition~2.2
is simply the set $S$ equipped with the ternary operation defined by $\{xyz\} = xy^{-1}z$.
We denote this generalized heap by $GH(S)$.

The obvious question is whether every generalized heap determines an equivalence bimodule.
This we shall answer, in the affirmative, in the remainder of this paper.

\section{From generalized heaps to equivalence bimodules}

This direction is more complex and will be carried out in a series of steps.

Given a generalized heap $(X,\{ \})$ our goal is to construct an equivalence bimodule.
This requires us to construct two inverse semigroups.
As a first step, we shall construct two meet semilattices.

Recall that a {\em band} is a semigroup in which every element is an idempotent.
A band is {\em left normal} if it satisfies the law $xyz = xzy$, and it is {\em right normal} if it satisfies the law $xyz = yxz$.
A commutative band is just a {\em semilattice}.
Our first result is well-known \cite{H}.

\begin{lemma} \mbox{}
\begin{enumerate}

\item Let $S$ be a right normal band.
Then the minimum semilattice congruence on $S$ is Green's relation  $\mathcal{R}$.

\item Let $S$ be a left normal band.
Then the minimum semilattice congruence on $S$ is Green's relation  $\mathcal{L}$.

\end{enumerate}
\end{lemma}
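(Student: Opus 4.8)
The plan is to prove part (1) directly and then obtain part (2) as its left--right dual, since the two statements are exact mirror images under reversal of the multiplication. Throughout I would work with the standard description of Green's relation $\mathcal{R}$ on a band: for idempotents $a,b$ one has $a\,\mathcal{R}\,b$ if and only if $ab=b$ and $ba=a$. The forward implication follows from writing $a=bx$, $b=ay$ and using idempotency, and the converse is immediate since $ab=b$, $ba=a$ exhibit $a,b$ in each other's principal right ideals. I would also record at the outset the two auxiliary identities $cbc=bc$ and $cac=ac$, each of which drops out instantly from the right normal law $xyz=yxz$ together with $c^{2}=c$.

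First I would show that $\mathcal{R}$ is a congruence. Since $\mathcal{R}$ is a left congruence in \emph{any} semigroup, left compatibility is free, and the real content is right compatibility. Assuming $a\,\mathcal{R}\,b$, so that $ab=b$ and $ba=a$, I would compute $(ac)(bc)=acbc=a(cbc)=abc=(ab)c=bc$ and, symmetrically, $(bc)(ac)=bcac=b(cac)=bac=(ba)c=ac$, where the middle reductions use the two auxiliary identities. Together these give $ac\,\mathcal{R}\,bc$, so $\mathcal{R}$ is right compatible and hence a congruence.

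Next I would verify that $S/\mathcal{R}$ is a semilattice. As a quotient of a band it is automatically a band, so only commutativity remains, i.e.\ $ab\,\mathcal{R}\,ba$. A short calculation in the same spirit yields $(ab)(ba)=abba=aba=ba$ and $(ba)(ab)=baab=bab=ab$, which is exactly the $\mathcal{R}$-relation. For minimality, suppose $\rho$ is any congruence with $S/\rho$ a semilattice. If $a\,\mathcal{R}\,b$, then from $ab=b$ and $ba=a$ I pass to the commutative quotient and obtain $[b]=[a][b]=[b][a]=[a]$, so $a\,\rho\,b$; thus $\mathcal{R}\subseteq\rho$. Since $\mathcal{R}$ is itself a semilattice congruence, it is the least such, proving (1); part (2) is then the dual.

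The main obstacle is the single step of showing that $\mathcal{R}$ is a right congruence. In a general band $\mathcal{R}$ need not be a congruence at all, so this is precisely the place where right-normality must be invoked, and the whole argument turns on extracting and applying the identities $cbc=bc$ and $cac=ac$; everything else (the semilattice quotient and the minimality comparison) is comparatively routine.
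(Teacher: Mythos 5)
Your proof is correct, but it follows a genuinely different route from the paper's. The paper's argument is a two-line reduction to known band theory: it invokes the classical fact (cited to Howie) that in \emph{any} band the minimum semilattice congruence is Green's relation $\mathcal{D}$, and then uses right normality only to show that $\mathcal{L}$ is the equality relation (if $e\,\mathcal{L}\,f$ then $ef=e$, $fe=f$, and right normality gives $e=efe=fe=f$), whence $\mathcal{D}=\mathcal{L}\circ\mathcal{R}=\mathcal{R}$. You instead prove everything from scratch: that $\mathcal{R}$ is right compatible (the one place right normality enters, via $cbc=bc$ and $cac=ac$), that the quotient is commutative, and that $\mathcal{R}$ is contained in every semilattice congruence; all these computations check out, including the characterization $a\,\mathcal{R}\,b \Leftrightarrow ab=b,\ ba=a$ for idempotents and the identities $(ab)(ba)=aba=ba$, $(ba)(ab)=bab=ab$. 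The trade-off: the paper's proof is shorter and exposes the structural fact that $\mathcal{L}$ is trivial in a right normal band, but it rests on the nontrivial theorem that $\mathcal{D}$ is the least semilattice congruence on a band; your proof is self-contained, needing only the definitions of Green's relations and elementary band manipulations, at the cost of more computation. Both are valid, and both dispose of part (2) by the same left--right duality.
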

\begin{proof} In any band, the minimum semilattice congruence is Green's relation $\mathcal{D}$.
Suppose the band is right normal and that $e \mathcal{L} f$.
Then $ef = e$ and $fe = f$.
But then $efe = e$ and $efe = fe$.
Thus $e = f$.
It follows that $\mathcal{L}$ is the equality relation and so, since $\mathcal{D} = \mathcal{L} \circ \mathcal{R}$,
it follows that the minimum semilattice congruence is $\mathcal{R}$.
\end{proof}

Our next result tells us that from a generalized heap we can construct left and right normal bands and so,
by the above lemma, we may construct two semilattices.
 
\begin{lemma}
Let $X$ be a generalized heap.
\begin{enumerate}

\item Define the binary operation $\circ$ on $X$ by $x \circ y = \{xxy\}$.
Then $(X,\circ)$ is a right normal band.
Put $E = X^{\circ}/\mathcal{R}$, a semilattice, and denote the natural map from $X$ to $E$ by $p$.

\item Define the binary operation $\bullet$ on $X$ by $x \bullet y = \{xyy\}$.
Then $(X,\bullet)$ is a left normal band.
Put $F = X^{\bullet}/\mathcal{L}$, a semilattice, and denote the natural map from $X$ to $F$ by $q$.

\item A mixed associativity law holds
$$(x \circ y) \bullet z = x \circ (y \bullet z)$$
for all $x,y,z \in X$.

\end{enumerate}
\end{lemma}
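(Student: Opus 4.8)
The plan is to verify that $(X,\circ)$ and $(X,\bullet)$ satisfy the band axioms together with the appropriate normality law, and then to read off the semilattice quotients directly from Lemma~3.1. Before doing any real work I would note that part~(2) is formally dual to part~(1): the involution $\{xyz\}\mapsto\{zyx\}$ carries a generalized heap to a generalized heap, fixing (A1) and (A2) and interchanging (A3) and (A4), and it converts the operation $\circ$ of the reversed heap into the opposite of the operation $\bullet$ of $X$. Since the opposite of a right normal band is a left normal band, and the opposite interchanges Green's relations $\mathcal{R}$ and $\mathcal{L}$, part~(2) will follow from part~(1) (one could equally rerun the computation below with (A4) in place of (A3)). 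I expect part~(3) to be immediate.

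For part~(1), idempotency is just (A1): $x\circ x=\{xxx\}=x$. The substance is associativity, and I would prove it by reducing both $x\circ(y\circ z)$ and $(x\circ y)\circ z$ to the common value $\{xx\{yyz\}\}$. The left association is immediate from the definition, since $x\circ(y\circ z)=\{xx\{yyz\}\}$. For the right association, writing $a=\{xxy\}$, I would steer $\{aaz\}$ through successive applications of (A2): first $\{aaz\}=\{\{xxy\}az\}=\{x\{ayx\}z\}$; then $\{ayx\}=\{\{xxy\}yx\}=\{x\{yyx\}x\}=\{xx\{yyx\}\}$, at which point (A3) together with (A1) collapses $\{xx\{yyx\}\}=\{yy\{xxx\}\}=\{yyx\}$. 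This leaves $(x\circ y)\circ z=\{x\{yyx\}z\}$, and a final use of (A2) gives $\{x\{yyx\}z\}=\{xx\{yyz\}\}$. Comparing the two expressions establishes associativity and shows both equal $\{xx\{yyz\}\}$. Right-normality is then immediate, since $(x\circ y)\circ z=\{xx\{yyz\}\}=\{yy\{xxz\}\}=(y\circ x)\circ z$ by (A3). Lemma~3.1(1) now identifies the minimum semilattice congruence on $(X,\circ)$ with $\mathcal{R}$, so $E=X^{\circ}/\mathcal{R}$ is a semilattice; the left normal band $(X,\bullet)$ yields $F=X^{\bullet}/\mathcal{L}$ via Lemma~3.1(2) in the same way.

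For part~(3), expanding each side gives $(x\circ y)\bullet z=\{\{xxy\}zz\}$ and $x\circ(y\bullet z)=\{xx\{yzz\}\}$, and these are equal by the identification of the first and third expressions in (A2), applied to the arguments $x,x,y,z,z$. So the mixed associativity law is a single application of one axiom.

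The main obstacle is the associativity computation in part~(1): the bookkeeping required to push the nested brackets through (A2) in the correct order and to recognise the one point where (A3) may be applied to simplify $\{xx\{yyx\}\}$. Once that reduction is in hand, idempotency, the normality laws, the mixed associativity, and the passage to semilattices are each short consequences of a single axiom or of Lemma~3.1.
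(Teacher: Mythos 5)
Your proof is correct and follows essentially the same route as the paper: both reduce $(x\circ y)\circ z$ and $x\circ(y\circ z)$ to the common form $\{xx\{yyz\}\}$ via (A2), (A3) and (A1), obtain right normality from (A3), invoke Lemma~3.1 for the semilattice quotients, and dispose of part~(3) with a single application of (A2). Your only addition is to make explicit the reversal involution $\{xyz\}\mapsto\{zyx\}$ underlying the paper's appeal to ``symmetry'' for part~(2), which is a useful clarification but not a different argument.
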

\begin{proof}
We prove (1); the proof of (2) follows by symmetry.
The fact that every element is an idempotent follows by (A1).
We prove associativity.
By definition
$$(x \circ y) \circ z = \{\{xxy\}\{xxy\}z\}.$$ 
But
$$\{\{xxy\}\{xxy\}z\} = \{ \{ \{xxy\}yx\}xz\}$$
by (A2).
By (A2) and (A3) we have that
$$
\{ \{ \{xxy\}yx\}xz\}
= \{ \{xx\{yyx\}\}xz\}
= \{\{yy\{xxx\}\}xz\}.
$$
By (A1) this is equal to
$$\{\{yyx\}xz\}.$$
Finally we use (A2) and (A3) to get
$$\{yy\{xxz\}\} 
= \{xx\{yyz\}\}
=
x \circ (y \circ z),
$$
as required.
We have thefore proved that we have a band.
To show that we have a right normal band observe that
$$x \circ y \circ z = \{xx\{yyz\}\} = \{yy\{xxz\}\} = y \circ x \circ z$$
using (A3).

(3) We have that
$$x \circ (y \bullet z)
=
\{xx \{ y \bullet z) \}
= 
\{ xx \{ y zz \}\}
=
\{\{xxy\}zz\}
=
(x \circ y) \bullet z.
$$

\end{proof}

We write $X^{\circ}$ and $X^{\bullet}$ when we wish to regard the set $X$ with respect to each of these two binary operations.
By the above 
$$p(x) = p(y) \Leftrightarrow  x = y \circ x \mbox{ and } y = x \circ y \Leftrightarrow x \mathcal{R} y \text{ in } X^{\circ}$$
and
$$q(x) = q(y) \Leftrightarrow  x = x \bullet y \mbox{ and } y = y \bullet x \Leftrightarrow x \mathcal{L} y \text{ in } X^{\bullet}.$$
The elements of the generalized heap $X$ should be regarded as arrows
$$p(x) \stackrel{x}{\longleftarrow} q(x).$$
We therefore have the following diagram
$$
\xymatrix
{
&  & X \ar[dl]_{p} \ar[dr]^{q}   &\\
&E & &F
}$$

Left normal and right normal bands are called {\em restrictive semigroups of the first and second kind} in \cite{BS}.
To explain this terminology, we apply Lemma~3.2 to the generalized heap $GH(S)$ constructed from the inverse semigroup $S$.
The operation $\circ$ is therefore the binary operation defined on the set $S$ by $x \circ y = xx^{-1}y$.
Regarding an inverse semigroup as a semigroup of partial bijections, 
this operation restricts the partial bijection $y$ to the range of $x$.
It is therefore an extension of the corestriction operation defined in the inductive groupoid associated with every inverse semigroup \cite{L2}.

The next step is to show that from each generalized heap we can construct a pregroupoid in the sense of Kock \cite{AK1,AK2}.
Let $X$ be a set equipped with a {\em partially defined} ternary operation $\{\}$, 
and surjections $p \colon \: X \rightarrow E$ and $q \colon \: X \rightarrow F$
such that $\{xyz\}$ is defined if and only if $q(x) = q(y)$ and $p(y) = p(z)$ 
and such that the following axioms hold:
\begin{description}

\item[{\rm (PG1)}] $p(\{xyz\}) = p(x)$ and $q(\{xyz\}) = q(z)$.

\item[{\rm (PG2)}] $\{xxz\} = z$ and $\{yxx\} = y$.

\item[{\rm (PG3)}] $\{vy \{ yxz \}\} = \{vxz\}$ and $\{\{yxz\}zw\} = \{yxw\}$.

\end{description}
Then we call $(X,\{\},p,q)$ a {\em pregroupoid}.

If $X$ is a generalized heap, then we define the {\em restricted product} on $X$ to be the ternary operation
restricted to those triples $(x,y,z)$ where $q(x) = q(y)$ and $p(y) = p(z)$.
Such triples should be regarded in the following way
$$\xymatrix{
& & & & \\
& &\ar[ul]_{x} \ar[ur]^{y} & &\ar[ul]_{z}
}$$
This should be read from right-to-left and interpreted as `$xy^{-1}z$'. 

\begin{proposition} Let $X$ be a generalized heap and let $p \colon \: X \rightarrow E$ and $q \colon \: X \rightarrow F$ be defined as above.
Then with respect to the restricted product $(X,\{\},p,q)$ is a pregroupoid.
\end{proposition}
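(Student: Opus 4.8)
The plan is to verify the three pregroupoid axioms directly, after first recording what the definedness conditions mean in terms of the two bands of Lemma~3.2. Recall from the equivalences displayed just after that lemma that $p(y) = p(z)$ is the same as $y \mathcal{R} z$ in $X^{\circ}$, which unpacks as the pair of identities $\{zzy\} = y$ and $\{yyz\} = z$, while $q(x) = q(y)$ is the same as $x \mathcal{L} y$ in $X^{\bullet}$, which unpacks as $\{xyy\} = x$ and $\{yxx\} = y$. Thus, whenever a restricted product $\{xyz\}$ is defined, I may freely use these four identities, and each axiom reduces to an equality of generalized-heap terms provable from (A1) and (A2) together with them.

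First I would treat (PG1). To prove $p(\{xyz\}) = p(x)$ I must show $\{xyz\} \mathcal{R} x$ in $X^{\circ}$, that is, $x \circ \{xyz\} = \{xyz\}$ and $\{xyz\} \circ x = x$. The first equality is immediate: $x \circ \{xyz\} = \{xx\{xyz\}\} = \{\{xxx\}yz\} = \{xyz\}$ by (A2) and (A1). The second equality is the only genuinely delicate computation in the proof. I would expand $\{xyz\} \circ x = \{\{xyz\}\{xyz\}x\}$, apply (A2) to rewrite it as $\{x\{\{xyz\}zy\}x\}$, and then simplify the inner bracket by a second application of (A2): $\{\{xyz\}zy\} = \{xy\{zzy\}\} = \{xyy\}$, where $\{zzy\} = y$ comes from $p(y) = p(z)$ and $\{xyy\} = x$ from $q(x) = q(y)$; substituting back gives $\{xxx\} = x$ by (A1), as required. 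The companion statement $q(\{xyz\}) = q(z)$ then follows from the left--right symmetry of (A1)--(A4) (the reversal $\{xyz\} \mapsto \{zyx\}$ is an anti-automorphism interchanging $\circ,p,\mathcal{R}$ with $\bullet,q,\mathcal{L}$), or by an entirely parallel direct computation.

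Axiom (PG2) is then routine: $\{xxz\}$ is defined precisely when $p(x) = p(z)$, and in that case $\{xxz\} = x \circ z = z$ since $x \mathcal{R} z$; dually $\{yxx\} = y \bullet x = y$ whenever $q(x) = q(y)$. For (PG3) I would first invoke (PG1) to see that both sides of each identity are defined: since $p(\{yxz\}) = p(y)$ and $q(\{yxz\}) = q(z)$, the outer products on the left are defined, and the source/target data propagate so that the right-hand products are defined as well. The equalities themselves each require only one application of (A2): $\{vy\{yxz\}\} = \{\{vyy\}xz\} = \{vxz\}$, using $\{vyy\} = v$ from $q(v) = q(y)$, and $\{\{yxz\}zw\} = \{yx\{zzw\}\} = \{yxw\}$, using $\{zzw\} = w$ from $p(z) = p(w)$.

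The main obstacle I anticipate is the reverse relation $\{xyz\} \circ x = x$ in (PG1): it is the one step that is not a single rewrite, needing two nested applications of (A2) and the simultaneous use of both definedness hypotheses $q(x) = q(y)$ and $p(y) = p(z)$. Once that identity is secured, the remaining verifications are short, and the bookkeeping of definedness for the partial operation in (PG3) is handled uniformly by (PG1).
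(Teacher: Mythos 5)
Your proof is correct and takes essentially the same route as the paper's: a direct verification of (PG1)--(PG3) from (A1), (A2) and the four definedness identities $x=\{xyy\}$, $y=\{yxx\}$, $y=\{zzy\}$, $z=\{yyz\}$, with the computations for (PG2) and (PG3) matching the paper's verbatim. The only variations are cosmetic: for the hard identity in (PG1) you rewrite $\{\{xyz\}\{xyz\}x\}$ as $\{x\{\{xyz\}zy\}x\}$ where the paper passes through $\{\{xyz\}z\{yxx\}\}$, and you dispatch $q(\{xyz\})=q(z)$ by the reversal duality (legitimate, and used elsewhere in the paper) where the paper simply repeats the direct computation.
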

\begin{proof}
(PG1) holds. Suppose that $q(x) = q(y)$ and $p(y) = p(z)$.
Thus $x = \{xyy\}$, $y = \{yxx\}$ and $y = \{zzy\}$ and $z = \{yyz\}$.
We have that
$$\{xx\{xyz\}\} = \{\{xxx\}yz\} = \{xyz\}$$
and
$$\{\{xyz\}\{xyz\}x\} = \{\{xyz\}z\{yxx\}\} = \{\{xyz\}zy\} = \{xy\{zzy\}\} = \{xyy\} = x.$$
Thus $p(\{xyz\}) = p(x)$.

We also have that
$$\{\{xyz\}zz\} = \{xy\{zzz\}\} = \{xyz\}$$
and
$$\{z\{xyz\}\{xyz\}\} = \{\{zzy\}x\{xyz\}\} = \{yx\{xyz\}\} = \{\{yxx\}yz\} = \{yyz\} = z.$$
Thus $q(\{xyz\}) = q(z)$.

(PG2) holds. Both of these follow immediately from the definitions

(PG3) holds. We have that
$$\{vy\{yxz\}\} = \{\{vyy\}xz\} = \{vxz\}.$$

Similarly, we have that
$$\{\{yxz\}zw\} = \{yx\{zzw\}\} = \{yxw\}.$$

\end{proof}

We now follow Kock \cite{AK2} and use this pregroupoid structure to construct two groupoids that we denote by $XX^{-1}$ and $X^{-1}X$.
We define $X^{-1}X$; the definition of $XX^{-1}$ is obtained dually.
Let 
$$XpX = \{(x,y) \in X \times X \colon \: p(x) = p(y) \}.$$
This is just the groupoid/equivalence relation determined by the relation
$\mathcal{R}$ on the semigroup $X^{\circ}$
We regard the elements of this set as diagrams
$$
\xymatrix{
& & &\\
&q(x)\ar[ur]^{x} & &q(y)\ar[ul]_{y}
}$$
which we informally think of as `$x^{-1}y$'.
On the set $XpX$ define the following relation
$$(x,y) \preceq (u,v) \Leftrightarrow x = x \bullet u, \, y = y \bullet v, \, y = \{xuv\}.$$

\begin{lemma} 
The relation $\preceq$ is a preorder and the equivalence relation $\equiv$ it determines is given by
$$(x,y) \equiv (u,v) \Leftrightarrow q(x) = q(u), q(y) = q(v), y = \{xuv\}.$$
\end{lemma}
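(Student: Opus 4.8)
The plan is to check that $\preceq$ is reflexive and transitive, and then to read off the equivalence it determines as its symmetric part $\preceq \cap \succeq$. Everything rests on translating the defining conditions into the semilattice language set up after Lemma~3.2: membership $(x,y) \in XpX$ says $p(x) = p(y)$, equivalently $y = \{xxy\}$ and $x = \{yyx\}$; and $q(x) = q(u)$ says $x = x \bullet u$ and $u = u \bullet x$, where $x \bullet u = \{xuu\}$. I would keep these equivalences to hand, since each defining clause of $\preceq$ is precisely one half of such a pair.

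For reflexivity, the clauses $x = x \bullet x$ and $y = y \bullet y$ are instances of (A1) (as $w \bullet w = \{www\} = w$), while $y = \{xxy\}$ is exactly the content of $p(x) = p(y)$; hence $(x,y) \preceq (x,y)$. For transitivity, assuming $(x,y) \preceq (u,v)$ and $(u,v) \preceq (a,b)$, the two $\bullet$-clauses are immediate from associativity in the left normal band $X^{\bullet}$ of Lemma~3.2(2): from $x = x \bullet u$ and $u = u \bullet a$ one gets $x \bullet a = (x \bullet u) \bullet a = x \bullet (u \bullet a) = x \bullet u = x$, and symmetrically $y \bullet b = y$. The ternary clause I would handle by substitution: since $y = \{xuv\}$ and $v = \{uab\}$, axiom (A2) yields $y = \{xu\{uab\}\} = \{\{xuu\}ab\} = \{xab\}$, the last step using $\{xuu\} = x \bullet u = x$.

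For the equivalence, $(x,y) \equiv (u,v)$ unpacks to the six conditions $x = x \bullet u$, $u = u \bullet x$, $y = y \bullet v$, $v = v \bullet y$, $y = \{xuv\}$ and $v = \{uxy\}$. The four $\bullet$-equalities reassemble, via the characterization of $q$, into precisely $q(x) = q(u)$ and $q(y) = q(v)$, so the only thing left to reconcile with the stated relation is the pair of ternary conditions. The hard part --- and really the only substantive point --- is to show that, granted $q(x) = q(u)$, $p(u) = p(v)$ and $p(x) = p(y)$, each of $y = \{xuv\}$ and $v = \{uxy\}$ forces the other, so that one is redundant. I would prove this by computing $\{uxy\} = \{ux\{xuv\}\} = \{\{uxx\}uv\} = \{uuv\} = v$ using (A2), where $\{uxx\} = u \bullet x = u$ is supplied by $q(x) = q(u)$ and $\{uuv\} = u \circ v = v$ by $p(u) = p(v)$; the reverse implication is symmetric. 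The delicacy here is purely bookkeeping: this single use of (A2) must interleave one $q$-equality with one $p$-equality, and the care lies in tracking which defining clause licenses which collapse rather than in any depth of calculation.
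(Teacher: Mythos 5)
Your proof is correct and follows essentially the same route as the paper's: reflexivity from the band laws and $p(x)=p(y)$, transitivity via associativity of $\bullet$ together with the (A2) substitution $\{xu\{uab\}\}=\{\{xuu\}ab\}=\{xab\}$, and the characterization of $\equiv$ by deriving the redundant ternary condition through the computation $\{uxy\}=\{ux\{xuv\}\}=\{\{uxx\}uv\}=\{uuv\}=v$, which is word for word the paper's key step. You supply explicitly the ``simple calculation'' the paper leaves to the reader, but there is no difference in method.
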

\begin{proof}
The fact that $\preceq$ is reflexive follows from the fact that $X^{\bullet}$ is a band and the fact that $p(x) = p(y)$.
Let $(x,y) \preceq (u,v)$ and $(u,v) \preceq (w,z)$.
Then $x = x \bullet u$, $y = y \bullet v$, $y = \{xuv\}$,
and
$u = u \bullet w$, $v = v \bullet z$, $v = \{uwz\}$.
It is immediate that $x = x \bullet w$ and $y = y \bullet z$
and a simple calculation to show that $y = \{xwz \}$.
Thus $\preceq$ is transitive.

It is immediate that if $(x,y) \equiv (u,v)$ then $q(x) = q(u), q(y) = q(v), y = \{xuv\}$.
We prove the converse by showing that from $q(x) = q(u), q(y) = q(v), y = \{xuv\}$
we may deduce that $v = \{uxy\}$.
But 
$$\{uxy\} = \{ux\{xuv\}\} = \{ \{uxx\}uv\} = \{uuv\} = v$$
since $u = \{uxx\}$ and $v = \{uuv\}$.
\end{proof}

We denote the equivalence class containing the pair $(x,y)$ by $x^{-1}y$ and the set of equivalence classes by $X^{-1}X$.
The element $x^{-1}y$ should be regarded as an arrow 
$$\xymatrix{
&q(x) &  &q(y) \ar[ll]_{x^{-1}y}
}
$$

Define a partial binary operation on $X^{-1}X$ by
$$x^{-1}y \cdot u^{-1}v = x^{-1}\{yuv\}$$
if and only if $q(y) = q(u)$.

\begin{lemma} With the above definitions we have the following:
\begin{enumerate}

\item $X^{-1}X$ is a groupoid equipped with an order defined by 
$$x^{-1}y \leq u^{-1}v \Leftrightarrow x = x \bullet u \mbox{ and } y = \{xuv\}$$
whose set of identities can be identified with the semilattice $F$.

\item $XX^{-1}$ is a groupoid equipped with an order defined by
$$xy^{-1} \leq uv^{-1} \Leftrightarrow y = v \circ y  \mbox{ and } x = \{uvy\}$$
whose set of identities can be identified with the semilattice $E$

\end{enumerate}
\end{lemma}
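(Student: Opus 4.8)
The plan is to prove part (1) in full and to obtain part (2) by the left-right symmetry running through the entire construction, which interchanges $\circ$ with $\bullet$, $\mathcal{R}$ with $\mathcal{L}$, the semilattice $E$ with $F$, and $p$ with $q$. For (1) there are four things to establish: that the partial product descends to the $\equiv$-classes of Lemma~3.5, that $X^{-1}X$ is then a groupoid, that $\leq$ is a well-defined partial order, and that the set of identities together with $\leq$ reproduces the semilattice $F$. Throughout, the crucial simplification is the equivalence $a = a \bullet b \Leftrightarrow q(a) \leq q(b)$ in $F$, which lets me trade band conditions for statements about $q$, together with the fact that $q(x)=q(y)$ is the same as $x\,\mathcal{L}\,y$ in $X^{\bullet}$.

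I would start with the groupoid axioms, as these are cleanest. Composability is governed by $q(y)=q(u)$, which is invariant under $\equiv$ by Lemma~3.5, so it descends to classes. Associativity of $x^{-1}y \cdot u^{-1}v = x^{-1}\{yuv\}$ is precisely axiom (A2): whenever $q(y)=q(u)$ and $q(v)=q(s)$, both bracketings of a triple product collapse to the common value $x^{-1}\{\{yuv\}st\} = x^{-1}\{yu\{vst\}\}$. The identity at a point is $x^{-1}x$; using (PG1) one checks $x^{-1}x \cdot w^{-1}z = w^{-1}z$ whenever $q(x)=q(w)$, because $(x,\{xwz\}) \equiv (w,z)$ holds automatically, and dually on the other side. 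Moreover $x^{-1}x$ depends only on the $\mathcal{L}$-class of $x$, hence only on $q(x)$. Inverses are given by $y^{-1}x$, since $x^{-1}y \cdot y^{-1}x = x^{-1}\{yyx\} = x^{-1}x$ by (PG2). This realises $X^{-1}X$ as the groupoid Kock associates to the pregroupoid of Proposition~3.4.

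The real work lies in two well-definedness verifications. For the product, given $(x,y)\equiv(x',y')$ and $(u,v)\equiv(u',v')$ with $q(y)=q(u)$, I must show $(x,\{yuv\})\equiv(x',\{y'u'v'\})$. The $q$-coordinates match by (PG1) and Lemma~3.5, so it remains to prove $\{yuv\} = \{xx'\{y'u'v'\}\}$. Expanding the right-hand side by (A2) gives $\{\{xx'y'\}u'v'\} = \{yu'v'\}$, using $y=\{xx'y'\}$; and a second application of (A2), substituting $v = \{uu'v'\}$ and using $\{yuu\} = y \bullet u = y$ (which holds because $q(y)=q(u)$ forces $y\,\mathcal{L}\,u$ in $X^{\bullet}$), reduces $\{yuv\}$ to $\{yu'v'\}$. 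The well-definedness of $\leq$ is entirely of the same flavour.

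Finally I would treat the order $x^{-1}y \leq u^{-1}v \Leftrightarrow x = x\bullet u$ and $y=\{xuv\}$. Reflexivity follows from (PG2) and the band law; transitivity again unwinds through (A2); antisymmetry produces $q(x)=q(u)$ from $x=x\bullet u$ and $u=u\bullet x$, and $q(y)=q(v)$ from (PG1) applied to $y=\{xuv\}$, whence $(x,y)\equiv(u,v)$. Restricting $\leq$ to identities, both clauses of $x^{-1}x \leq u^{-1}u$ collapse to the single condition $x = x\bullet u$, which is exactly $q(x)\leq q(u)$; thus $x^{-1}x \mapsto q(x)$ is an order isomorphism from the identities of $X^{-1}X$ onto the semilattice $F$. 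I expect the principal obstacle to be bookkeeping rather than any conceptual difficulty: ensuring that every bracket written is a legal restricted product in the sense of Proposition~3.4, and that each use of (A2) is applied to the correct triple.
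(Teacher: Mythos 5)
Your proposal is correct, and it is essentially a self-contained expansion of what the paper handles by citation. The paper's own proof is very short: it observes that the order on $\equiv$-classes is well defined because it is exactly the partial order induced by the preorder $\preceq$ of the preceding lemma, it cites Kock \cite{AK2} for the fact that $X^{-1}X$ with the partial product is a groupoid, and it then verifies directly (as you do) that the identities $x^{-1}x$ are in bijective, order-preserving correspondence with $F$ via $x^{-1}x \mapsto q(x)$. You instead prove everything from scratch: associativity from (A2), identities and inverses from (PG1)/(PG2), and the two well-definedness checks by explicit heap calculations, with the key lemma $a = a \bullet b \Leftrightarrow q(a) \leq q(b)$ (which is correct: one direction applies the homomorphism $q$, the other uses $a \,\mathcal{L}\, (a\bullet b)$ in the band $X^{\bullet}$). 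Your route buys independence from Kock's paper and makes visible exactly which heap axioms drive each groupoid axiom; the paper's route is shorter but leaves implicit a small point your approach never needs, namely that the two-clause order in the lemma agrees with the three-clause preorder $\preceq$ — the clause $y = y\bullet v$ is redundant because $\{\{xuv\}vv\} = \{xu\{vvv\}\} = \{xuv\}$ by (A2) and (A1). Two bookkeeping remarks: your citations ``Lemma~3.5'' and ``Proposition~3.4'' should be, in the paper's numbering, Lemma~3.4 (the preorder/$\equiv$ lemma) and Proposition~3.3 (the pregroupoid); and in the antisymmetry step you correctly derive $q(x)=q(u)$ \emph{before} invoking (PG1) on $\{xuv\}$, which is necessary since (PG1) applies only to restricted products — keep that ordering explicit when writing it up.
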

\begin{proof} We prove (1); the proof of (2) follows by symmetry.
The order is well-defined on the equivalence classes since it is precisely the one coming
from the preorder used to define that relation.
The fact that $X^{-1}X$ with the partial binary operation is a groupoid follows from \cite{AK2}.

We check that the identities of the groupoid $X^{-1}X$ can be identified with the semilattice $F$.
Identities have the form $x^{-1}x$.
Observe that $x^{-1}x = y^{-1}y$ iff $q(x) = q(y)$.
Thus there is a bijection between the identities of $X^{-1}X$ and the semilattice $F$.
Also $x^{-1}x \leq y^{-1}y$ iff $x = x \bullet y$.
It follows that the partially ordered set of identities of $X^{-1}X$
is order-isomorphic with the meet semilattice $F$.
\end{proof}

Both groupoids $X^{-1}X$ and $XX^{-1}$ are equipped with partial orders.
We next show that $X^{-1}X$ and $XX^{-1}$ are in fact inverse semigroups by using the theory of ordered groupoids:
specifically the Ehresmann-Schein-Nambooripad theorem \cite{L2}.

\begin{proposition} \mbox{}
\begin{enumerate}

\item $X^{-1}X$ is an inductive groupoid with pseudoproduct given by
$$x^{-1}y \otimes u^{-1}v = \{\{yuu\}yx\}^{-1}\{yuv\}.$$
The semilattice of idempotents of $X^{-1}X$ is isomorphic to the semilattice $F$.

\item $XX^{-1}$ is an inductive groupoid with pseudoproduct given by
$$xy^{-1} \otimes uv^{-1} = \{xyu\}\{vu\{yyu\}\}^{-1}.$$
The semilattice of idempotents of $XX^{-1}$ is isomorphic to the semilattice $E$.

\end{enumerate}
\end{proposition}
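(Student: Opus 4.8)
The plan is to establish~(1) in detail and to deduce~(2) by the left--right duality used systematically throughout Section~3 (interchanging $\circ$ with $\bullet$, $p$ with $q$, $E$ with $F$, and $X^{-1}X$ with $XX^{-1}$). By Lemma~3.5(1), $X^{-1}X$ is already a groupoid carrying a partial order, and its identities $x^{-1}x$ have been identified, order-isomorphically, with the meet semilattice $F$. By the Ehresmann--Schein--Nambooripad theorem \cite{L2}, to conclude that $X^{-1}X$ is an inductive groupoid --- and to extract its pseudoproduct --- it therefore suffices to verify that this order makes $X^{-1}X$ an \emph{ordered} groupoid; inductivity is then automatic since $F$ is a meet semilattice. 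The inverse of $x^{-1}y$ is $y^{-1}x$, which is a legitimate element because $p(x)=p(y)$ and a genuine inverse because the identity $\{yyx\}=x$ is merely a restatement of $p(x)=p(y)$.

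First I would check the two order-compatibility axioms. For compatibility with inversion, starting from $x^{-1}y\leq u^{-1}v$, that is $x=x\bullet u$ and $y=\{xuv\}$, I would derive $y=y\bullet v$ and $x=\{yvu\}$, which is exactly $y^{-1}x\leq v^{-1}u$; this is a short manipulation with (A1)--(A4) and the band identities of Lemma~3.2. For compatibility with the partial product, I would take two comparabilities whose products are both defined, compute the product of the smaller pair via the rule $x^{-1}y\cdot u^{-1}v=x^{-1}\{yuv\}$, and check that it lies below the product of the larger pair, the mixed associativity law of Lemma~3.2(3) being the natural tool. At each stage one must confirm that the expressions are independent of the chosen representatives, for which the explicit description of $\equiv$ in Lemma~3.4 is precisely what is needed.

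The substance of the proof lies in the restriction axiom and the ensuing pseudoproduct formula. Given an arrow $x^{-1}y$ and an identity $w^{-1}w$ below its domain $y^{-1}y$ --- equivalently $w=w\bullet y$ --- I would exhibit the restriction explicitly as $\{wyx\}^{-1}w$: a direct computation shows that this has domain $w^{-1}w$ and lies below $x^{-1}y$, and uniqueness is a direct check against the characterisation of $\equiv$ in Lemma~3.4. Dually, the corestriction of $u^{-1}v$ to an identity $w^{-1}w$ below its range $u^{-1}u$ is $w^{-1}\{wuv\}$. To compute $x^{-1}y\otimes u^{-1}v$ I would first identify the relevant meet: since the meet in $F$ is the image under $q$ of the left normal band product (Lemma~3.2), the meet of the domain $q(y)$ of $x^{-1}y$ and the range $q(u)$ of $u^{-1}v$ is $q(\{yuu\})$, represented by the identity $\{yuu\}^{-1}\{yuu\}$. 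Taking $w=\{yuu\}$ in the two formulas above, the restriction of $x^{-1}y$ is $\{\{yuu\}yx\}^{-1}\{yuu\}$ and the corestriction of $u^{-1}v$ is $\{yuu\}^{-1}\{yuv\}$ (using $\{\{yuu\}uv\}=\{yuv\}$, which is (A2) and (A1)). Their composite is, by the product rule, $\{\{yuu\}yx\}^{-1}\{\{yuu\}\{yuu\}\{yuv\}\}$, which I claim is $\equiv$-equal to $\{\{yuu\}yx\}^{-1}\{yuv\}$; this last expression is the asserted pseudoproduct.

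The main obstacle is precisely this final reduction, and more generally the book-keeping modulo $\equiv$. The passage from $\{\{yuu\}\{yuu\}\{yuv\}\}$ to $\{yuv\}$ is \emph{not} a heap identity; it holds only after pairing with the fixed left component $\{\{yuu\}yx\}$, so it must be justified through the description of $\equiv$ in Lemma~3.4 rather than by naive rewriting with (A1)--(A4). Verifying well-definedness of the restriction, corestriction and composite on $\equiv$-classes, and confirming that $\{yuu\}$ genuinely represents the correct meet (that is, $\{yuu\}=\{yuu\}\bullet y=\{yuu\}\bullet u$), are the points at which the axioms must be deployed with care. Once these are in hand, the inductivity of $X^{-1}X$ and the identification of its idempotent semilattice with $F$ follow from Lemma~3.5 together with the Ehresmann--Schein--Nambooripad theorem, and (2) is the mirror image.
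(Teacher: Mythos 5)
Your proposal follows the paper's own proof essentially step for step: Lemma~3.5 supplies the ordered groupoid data with identities order-isomorphic to $F$; you verify compatibility of the order with inversion and with the groupoid product, exhibit explicit (co)restrictions, invoke the Ehresmann--Schein--Nambooripad theorem (inductivity coming from $F$ being a meet semilattice), and compute the pseudoproduct as the composite of the restriction of $x^{-1}y$ and the corestriction of $u^{-1}v$ at the meet represented by $\{yuu\}$ --- exactly the paper's steps (i), (ii), (iii) and its closing computation, with the same formulas.

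The one point where you go astray is the claim that the passage from $\{\{yuu\}\{yuu\}\{yuv\}\}$ to $\{yuv\}$ ``is not a heap identity'' and must instead be routed through the description of $\equiv$ in Lemma~3.4. That claim is false: the equality is derivable from (A1), (A2) together with Lemma~3.2, and this direct rewriting is what the paper's ``quickly simplifies'' refers to. Indeed, put $a=\{yuu\}$. By (A2),
$$\{aa\{yuv\}\}=\{\{aay\}uv\},$$
and
$$\{aay\}=\{y\{auu\}y\}=\{y\{yuu\}y\}=\{\{yuu\}yy\}=(y\bullet u)\bullet y=y\bullet y\bullet u=y\bullet u=a,$$
using (A2), $\{auu\}=\{y\{uuu\}u\}=a$ by (A1), and left normality of $X^{\bullet}$ (Lemma~3.2(2)). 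Hence
$$\{aa\{yuv\}\}=\{auv\}=\{y\{uuu\}v\}=\{yuv\}$$
by (A2) and (A1). Your fallback through Lemma~3.4 would also succeed (the two pairs are in fact literally equal, so a fortiori $\equiv$-related), so the mis-statement does not invalidate the argument; it only replaces a short rewriting by unnecessary bookkeeping.
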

\begin{proof}
We prove (1); the proof of (2) follows by symmetry.\\

\noindent
(i) $x^{-1}y \leq u^{-1}v \Rightarrow y^{-1}x \leq v^{-1}u$.\\
This follows from the fact that
$$\{yvu\}
= \{ \{xuv\}vu \}
= \{xuu\}
= x.$$

\noindent
(ii) $x_{1}^{-1}y_{1} \leq u_{1}^{-1}v_{1}$ and $x_{2}^{-1}y_{2} \leq u_{2}^{-1}v_{2}$
implies that
$x_{1}^{-1}y_{1}x_{2}^{-1}y_{2}
\leq
u_{1}^{-1}v_{1}u_{2}^{-1}v_{2}$
where the products are groupoid products.\\
We have that $x_{1} = x_{1} \bullet u_{1}$ and $y_{1} = \{x_{1}u_{1}v_{1}\}$
and $x_{2} = x_{2} \bullet u_{2}$ and $y_{2} = \{x_{2}u_{2}v_{2}\}$.
We shall prove that $x_{1}^{-1}\{y_{1}x_{2}y_{2}\} \leq u_{1}^{-1}\{v_{1}u_{2}v_{2}\}$.
We have $x_{1} = x_{1} \bullet u_{1}$.
Now $q(v_{1}) = q(u_{2})$ and so $v_{1} = \{v_{1}u_{2}u_{2}\}$.
Thus 
$$\{y_{1}u_{2}u_{2}\} = \{ \{x_{1}u_{1}v_{1}\}u_{2}u_{2}\} = \{x_{1}u_{1} \{v_{1}u_{2}u_{2}\}\}
= \{x_{1}u_{1}v_{1}\} = y_{1}.$$
We now calculate
$$\{x_{1}u_{1}\{v_{1}u_{2}v_{2}\}\}
=
\{ \{x_{1}u_{1}v_{1}\}u_{2}v_{2}\}
= 
\{y_{1}u_{2}v_{2}\}
=
\{y_{1}u_{2} \{u_{2}x_{2}y_{2}\}\}$$
this is equal to
$$\{ \{y_{1}u_{2}u_{2}\}x_{2}y_{2}\}$$
which is just
$\{y_{1}x_{2}y_{2}\}$,
as required.\\

\noindent
(iii) We now construct corestrictions.\\
Let $z^{-1}z \leq x^{-1}x$.
Define 
$$(z^{-1}z \mid x^{-1}y) = z^{-1}\{zxy\}.$$
We prove that this is a corestriction.
It is easy to check that $z^{-1}\{zxy\} \leq  x^{-1}y$.
Let $u^{-1}v \leq x^{-1}y$ where $u^{-1}u = z^{-1}z$.
Then 
$$\{uz \{zxy\} \} = \{ \{uzz\}xy\} = \{uxy\} = v.$$
We have therefore proved uniqueness.\\

It follows that $X^{-1}X$ is an ordered groupoid \cite{L2}.
It is inductive because the partially ordered set of identities forms a semilattice 
isomorphic to $F$.
Using the restriction and corestriction operations we can now calculate the pseudoproduct.
We get
$$x^{-1}y \otimes u^{-1}v 
=
\{\{yuu\}yx\}^{-1}\{yuu\} \cdot \{yuu\}^{-1}\{\{yu\}uv\}
$$
which quickly simplifies to
$$\{\{yuu\}yx\}^{-1}\{yuv\}.$$
\end{proof}

From now on we shall denote the pseudoproducts by concatenation.

With the above notation and results, we now make the following definitions:
\begin{itemize}

\item Define $\langle -,- \rangle \colon \: X \times X \rightarrow XX^{-1}$ by 
$$\langle x,y \rangle = \{xyy\}\{yxx\}^{-1} = (x \bullet y)(y \bullet x)^{-1}.$$

\item Define $[-,-]\colon \: X \times X \rightarrow X^{-1}X$ by 
$$[x,y] = \{yyx\}^{-1}\{xxy\} = (y \circ x)^{-1}(x \circ y).$$
 
\item Define $XX^{-1} \times X \rightarrow X$ by
$xy^{-1} \cdot z = \{xyz\}$.

\item Define $X \times X^{-1}X \rightarrow X$ by
$x \cdot y^{-1}z = \{xyz\}$.

\end{itemize}

\begin{lemma} \mbox{}
\begin{enumerate}

\item $\langle -,- \rangle \colon \: X \times X \rightarrow XX^{-1}$ is well-defined and surjective.

\item $[-,-]\colon \: X \times X \rightarrow X^{-1}X$ is well-defined and surjective.

\item $XX^{-1} \times X \rightarrow X$ is well-defined.

\item $X \times X^{-1}X \rightarrow X$ is well-defined.

\item Axioms (MC2),(MC3),(MC5),(MC6) and (MC7) hold. 

\end{enumerate}
\end{lemma}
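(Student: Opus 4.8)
The plan is to dispatch (1)--(4) as direct verifications and to concentrate the real work on (MC7) in part (5); the remaining axioms in (5) are formal. For (1), note that since $\langle x,y\rangle=\{xyy\}\{yxx\}^{-1}=(x\bullet y)(y\bullet x)^{-1}$, the only thing needed for this to name an element of $XX^{-1}$ is that its two components agree under $q$. Because $q\colon X^{\bullet}\to F$ is the quotient onto the minimum semilattice image (Lemma~3.1(2) and Lemma~3.2(2)), it is a homomorphism into a semilattice, so $q(x\bullet y)=q(x)q(y)=q(y)q(x)=q(y\bullet x)$ and the pair is admissible. For surjectivity, an arbitrary element of $XX^{-1}$ is $ab^{-1}$ with $q(a)=q(b)$, i.e.\ $a\mathcal{L}b$ in $X^{\bullet}$; then $a=\{abb\}$ and $b=\{baa\}$, so $\langle a,b\rangle=ab^{-1}$. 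Part (2) is the verbatim $p$-dual: membership uses $p(y\circ x)=p(x\circ y)$, and surjectivity uses that $a^{-1}b\in X^{-1}X$ forces $a\mathcal{R}b$ in $X^{\circ}$, whence $\{aab\}=b$, $\{bba\}=a$ and $[a,b]=a^{-1}b$.

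For (3) the issue is independence of the chosen representative. If $xy^{-1}=uv^{-1}$ then, by the dual of Lemma~3.5, $p(x)=p(u)$, $p(y)=p(v)$ and $x=\{uvy\}$. Substituting and applying the middle form of (A2) gives $\{xyz\}=\{\{uvy\}yz\}=\{u\{yyv\}z\}$, and $p(y)=p(v)$ forces $v=y\circ v=\{yyv\}$, so the value collapses to $\{uvz\}$; the action is therefore well defined. Part (4) is the $q$-dual: from $y^{-1}z=v^{-1}w$, i.e.\ $q(y)=q(v)$, $q(z)=q(w)$, $z=\{yvw\}$, one obtains $\{xyz\}=\{xy\{yvw\}\}=\{x\{vyy\}w\}=\{xvw\}$, again by (A2) together with $v=v\bullet y=\{vyy\}$.

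In part (5), axioms (MC2) and (MC5) are immediate: in these inductive groupoids the semigroup inverse is the groupoid inverse, which reverses $ab^{-1}$ to $ba^{-1}$, so $\langle x,y\rangle^{-1}=\{yxx\}\{xyy\}^{-1}=\langle y,x\rangle$ and dually $[x,y]^{-1}=[y,x]$. Axioms (MC3) and (MC6) come from evaluating at one point: by (A1), $\langle x,x\rangle=\{xxx\}\{xxx\}^{-1}=xx^{-1}$, so $\langle x,x\rangle x=\{xxx\}=x$, and dually $[x,x]=x^{-1}x$ gives $x[x,x]=\{xxx\}=x$.

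The hard part will be (MC7), $\langle x,y\rangle z=x[y,z]$, which I expect to prove by showing that both sides reduce to the single term $\{xyz\}$ --- so that the two module maps are really the one ternary operation seen from two sides. On the left, unwinding the action gives $\langle x,y\rangle z=\{\{xyy\}\{yxx\}z\}$; the middle form of (A2) rewrites this as $\{x\{\{yxx\}yy\}z\}$, (A4) collapses $\{\{yxx\}yy\}=\{\{yyy\}xx\}=\{yxx\}$, and a further application of (A2) with (A1) yields $\{x\{yxx\}z\}=\{\{xxx\}yz\}=\{xyz\}$. On the right, $x[y,z]=\{x\{zzy\}\{yyz\}\}$; two applications of (A2) restructure this as $\{xy\{zz\{yyz\}\}\}$, (A3) reduces $\{zz\{yyz\}\}=\{yy\{zzz\}\}=\{yyz\}$, and (A2) with (A1) finishes via $\{xy\{yyz\}\}=\{x\{yyy\}z\}=\{xyz\}$. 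The only real danger is clerical: keeping straight which of the three guises of (A2) is being applied and invoking the correct instance of (A3) and (A4). Conceptually there is no obstruction, since the whole construction was reverse-engineered from the concrete model $GH(S)$, where $\{xyz\}=xy^{-1}z$ makes every one of these identities transparent.
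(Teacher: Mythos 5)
Your proof is correct, and most of it tracks the paper's argument: the surjectivity claims in (1)--(2), the well-definedness of the actions in (3)--(4), and the verification of the (MC) axioms in (5) all proceed exactly as in the paper. Indeed, for (MC7) you supply the explicit reductions of $\{\{xyy\}\{yxx\}z\}$ and $\{x\{zzy\}\{yyz\}\}$ to $\{xyz\}$ that the paper compresses into ``quickly simplifies'', and you write out part (3), which the paper dismisses as ``similar to the proof of (4)''. The one genuine divergence is in the well-definedness half of (1) and (2). The paper proves $q(\{xyy\})=q(\{yxx\})$ by a direct heap computation, showing $\{\{xyy\}\{yxx\}\{yxx\}\}=\{xyy\}$, i.e.\ $(x\bullet y)\bullet(y\bullet x)=x\bullet y$, together with its companion identity, so that $x\bullet y$ and $y\bullet x$ are $\mathcal{L}$-related in $X^{\bullet}$. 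You instead observe that $q\colon X^{\bullet}\to F$ is, by Lemmas 3.1 and 3.2, the quotient homomorphism onto the minimum semilattice image, so $q(x\bullet y)=q(x)q(y)=q(y)q(x)=q(y\bullet x)$ is immediate from commutativity of $F$; dually for $p$. This is shorter, conceptually cleaner, and sidesteps the error-prone ternary manipulations (the paper's versions of these computations in fact contain typographical slips), at the cost of leaning on the semilattice-quotient machinery rather than the raw heap axioms. Two trivial slips in your write-up, neither affecting correctness: reshaping $\{x\{zzy\}\{yyz\}\}$ into $\{xy\{zz\{yyz\}\}\}$ needs only one application of (A2), not two; and the equality criterion for $xy^{-1}=uv^{-1}$ used in (3) is most directly the dual of Lemma 3.4, though deducing it from the order of Lemma 3.5 as you cite also works, since equality yields both inequalities.
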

\begin{proof}
(1).  We prove that $q(\{xyy\}) = q(\{yxx\})$.
We calculate one part of the proof
$$\{ \{xyy\}\{yxx\}\{yxx\}\}
=
\{\{\{xyy\}xx\}y\{yxx\}\}
=
\{\{xyy\}y\{yxx\}\}$$
which is equal to
$$
\{x\{yyy\}\{yxx\}\}
=
\{xy\{yxx\}\}
=
\{\{xyy\}xx\}
=
\{xxx\}yy\}
=
\{xyy\}.$$
It remains to show that this map is surjective.
Let $xy^{-1} \in XX^{-1}$.
Then $q(x) = q(y)$.
Thus $x = \{xyy\}$ and $y = \{yxx\}$.
It follows that $\langle x,y \rangle  = \{xyy\}\{yxx\}^{-1} = xy^{-1}$,
as required.

(2). We prove that $p(\{yyx\}) = p(\{xxy\})$.
We calculate one part of the proof
$$\{\{yyx\}\{yyx\}\{xxy\}
=
\{\{\{yyx\}x\}\{xxy\}\}
=
\{\{\{yyx\}xy\}y\{xxy\}\}$$
which is equal to
$$\{\{yyx\}x\{yy\{xxy\}\}\}
=
\{\{yyx\}x\{xxy\}\}
=
\{yy\{xxy\}\}
=
\{xxy\}.$$
It remains to show that this map is surjective.
Let $x^{-1}y \in X^{-1}X$.
Then by assumption $p(x) = p(y)$.
Thus $x = \{yyx\}$ and $y = \{xxy\}$.
It follows that $[x,y] = \{yyx\}^{-1}\{xxy\} = x^{-1}y$,
as required.

(3)  We have to show that this operation is well-defined; this is similar to the proof of (4) below.

(4). We have to show that this operation is well-defined.
Let $y^{-1}z = u^{-1}v$.
We have that
$$x \cdot y^{-1}z = \{xyz\}$$
and
$$x \cdot u^{-1}v = \{xuv\}.$$
By assumption
$z = \{yuv\}$.
Thus
$$\{xyz\} = \{xy\{yuv\}\} = \{x \{uyy\}v\}.$$
But $q(y) = q(u)$ and so $u = \{uyy\}$.
Thus $\{xyz\} = \{xuv\}$, as required.

(5). (M2) By construction $\langle x, y \rangle$ and $\langle y, x \rangle$ are groupoid inverses of each other.

(M3) By definition $\langle x,x \rangle x = \{xxx\} = x$.

(M5) By construction $[x,y]$ and $[y,x]$ are groupoid inverses of each other.

(M6) By definition $x[x,x] = \{xxx\} = x$.

(M7) By definition 
$$\langle x, y \rangle z = \{ \{xyy\}\{yxx\}z\}$$
which quickly simplifies to $\{xyz\}$.
By definition
$$x[y,z] = \{x \{zzy\}\{yyz\}\}$$
which quickly simplifies to $\{xyz\}$.
\end{proof}

\begin{proposition} $X$ is a $(XX^{-1},X^{-1}X)$-bimodule,
and (MC1) and (MC4) hold. 
\end{proposition}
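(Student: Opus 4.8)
The plan is to take Lemma~3.7 as the starting point. That lemma already supplies the two bracket maps together with the two (total) operations $XX^{-1} \times X \rightarrow X$ and $X \times X^{-1}X \rightarrow X$, and it verifies (MC2), (MC3), (MC5), (MC6) and (MC7). Reading the product in each of the inverse semigroups $XX^{-1}$ and $X^{-1}X$ as the pseudoproduct of Proposition~3.6, what remains is: (a) that $xy^{-1}\cdot z = \{xyz\}$ is a unitary left action and $x \cdot y^{-1}z = \{xyz\}$ a unitary right action; (b) that the two actions commute, so that $X$ is an $(XX^{-1},X^{-1}X)$-bimodule; and (c) the axioms (MC1) and (MC4). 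As everywhere in Section~3 the statement is self-dual under interchanging the two sides, so I would prove the assertions for the left action and for (MC1), the right-hand versions and (MC4) following by the same argument read in the mirror.

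First I would dispose of the routine parts. Unitarity is immediate: for $z \in X$ we have $\langle z,z\rangle = \{zzz\}\{zzz\}^{-1} \in XX^{-1}$ and $\langle z,z\rangle \cdot z = \{zzz\} = z$ by (A1), so $XX^{-1}\cdot X = X$, and dually $X \cdot X^{-1}X = X$. The bimodule compatibility is just (A2): for $ab^{-1} \in XX^{-1}$, $c^{-1}d \in X^{-1}X$ and $x \in X$,
$$(ab^{-1}\cdot x)\cdot c^{-1}d = \{\{abx\}cd\} = \{ab\{xcd\}\} = ab^{-1}\cdot(x \cdot c^{-1}d),$$
the central equality being an instance of (A2).

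The first genuine computation is that the left operation respects the pseudoproduct, that is, $(xy^{-1}\otimes uv^{-1})\cdot z = xy^{-1}\cdot(uv^{-1}\cdot z)$. The right-hand side is $\{xy\{uvz\}\} = \{\{xyu\}vz\}$ by (A2), while the pseudoproduct formula of Proposition~3.6(2), after two applications of (A2), gives
$$(xy^{-1}\otimes uv^{-1})\cdot z = \{\{xyu\}\{vu\{yyu\}\}z\} = \{\{\{xyu\}\{yyu\}u\}vz\}.$$
Thus the action law reduces to the single heap identity $\{\{xyu\}\{yyu\}u\} = \{xyu\}$. I would verify this from (A1)--(A3): using (A2) and then (A3) one obtains $\{\{yyu\}uy\} = \{yy\{uuy\}\} = \{uuy\}$, and hence $\{\{xyu\}\{yyu\}u\} = \{x\{uuy\}u\} = \{\{xyu\}uu\} = \{xy\{uuu\}\} = \{xyu\}$, the last two equalities by (A2) and (A1). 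This step, together with its dual for the right action, carries the bulk of the manipulation.

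Finally, (MC1). Using the surjectivity in Lemma~3.7(1) I would write $s = \langle a,b\rangle$, so that $sx = \{abx\}$ and (MC1) becomes the identity $\langle\{abx\},y\rangle = \langle a,b\rangle\langle x,y\rangle$ in $XX^{-1}$. The two sides are easily seen to act identically on every $w \in X$: by the action law just proved together with (MC7),
$$(\langle a,b\rangle\langle x,y\rangle)\cdot w = \langle a,b\rangle \cdot \{xyw\} = \{ab\{xyw\}\} = \{\{abx\}yw\} = \langle\{abx\},y\rangle \cdot w,$$
the middle equality being (A2). The main obstacle is to upgrade this operator-level agreement to equality of the two elements of $XX^{-1}$. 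I would do this by establishing that the left action separates the points of $XX^{-1}$: if $g \cdot w = h \cdot w$ for all $w$, then, writing $g = a_1 b_1^{-1}$ and $h = c_1 d_1^{-1}$ with $q(a_1)=q(b_1)$ and $q(c_1)=q(d_1)$, evaluation at the source representatives $b_1$ and $d_1$ yields the ternary condition $a_1 = \{c_1 d_1 b_1\}$ defining the equivalence relation of Lemma~3.4, with the remaining equalities of $p$-images recovered from (PG1), whence $g = h$. Alternatively, one can expand $\langle a,b\rangle\langle x,y\rangle$ directly from Proposition~3.6(2): its numerator collapses by (MC7) and (A2) to $\{\{abx\}yy\}$, which is the numerator of $\langle\{abx\},y\rangle$, and a parallel reduction matches the denominators. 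Either way the work is mechanical but intricate, and this is where I expect the difficulty to lie; (MC4) is the mirror image.
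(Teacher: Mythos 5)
Your treatment of the module laws and bimodule compatibility is essentially the paper's: the paper also verifies $(xy^{-1}\otimes uv^{-1})\cdot z = xy^{-1}\cdot(uv^{-1}\cdot z)$ by direct heap manipulation (your reduction to the single identity $\{\{xyu\}\{yyu\}u\}=\{xyu\}$ is a cleaner organisation of that computation), it handles compatibility of the two actions by (A2) exactly as you do, and your explicit unitarity check via (A1) is a point the paper passes over silently. The genuine divergence is (MC1). The paper proves it head-on, as in your ``alternative'' route: it expands both $\langle xy^{-1}\cdot u,v\rangle$ and $xy^{-1}\otimes\langle u,v\rangle$ via Proposition~3.6(2), notes the positive parts agree by (A2), and matches the negative parts by a longer computation exploiting the left normality of $X^{\bullet}$ (Lemma~3.2). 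Your primary route --- both sides act identically on $X$, and the action is faithful --- is a genuinely different and conceptually more appealing decomposition: once faithfulness is known, (MC1) costs one line.

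However, the faithfulness step, which carries all the weight, is not justified as you state it. Writing $g=ab^{-1}$, $h=cd^{-1}$ with $q(a)=q(b)$, $q(c)=q(d)$ and $g\cdot w = h\cdot w$ for all $w$, evaluation at $b$ and $d$ does give $a=\{cdb\}$ and $c=\{abd\}$; but to conclude $g=h$ you also need $p(a)=p(c)$ and $p(b)=p(d)$, and (PG1) cannot deliver the latter: (PG1) governs only the \emph{restricted} product, so invoking it for $\{cdb\}$ presupposes $p(d)=p(b)$, which is precisely what you are trying to prove --- the appeal is circular. The equality $p(a)=p(c)$ is easy ($c\circ a=\{cc\{cdb\}\}=\{cdb\}=a$ by (A2) and (A1), and symmetrically $a\circ c=c$), but $p(b)=p(d)$ needs an honest computation from the axioms, for instance
$$b=\{baa\}=\{b\{cdb\}\{cdb\}\}=\{\{\{bbd\}cc\}db\}=\{\{bbd\}db\}=\{bb\{ddb\}\}=\{ddb\},$$
using (A2) repeatedly, $\{dcc\}=d$, then (A3) and (A1); dually $d=\{bbd\}$, so $p(b)=p(d)$. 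With these four conditions in hand, Lemma~3.5(2) and antisymmetry of the order give $g=h$, and your argument for (MC1) --- and its mirror for (MC4) --- goes through. So the route is sound and attractive, but this repair (or a fallback to the paper's direct expansion, whose ``mechanical but intricate'' details you deferred) is genuinely needed.
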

\begin{proof}
We show that $X$ is a left $XX^{-1}$-module.
We have that 
$$(xy^{-1}uv^{-1}) \cdot z = \{\{xyu\}\{vu\{yyu\}\}z\}$$
whereas
$$xy^{-1} \cdot (uv^{-1} \cdot z) = \{xy \{uvz\}\}.$$
But
$$\{\{xyu\}\{vu\{yyu\}\}z\}
=
\{\{xy \{ u \{yyu\}u\}\}vz\}
= 
\{\{\{xy\{yyu\}\}vz\}$$
which is equal to
$$\{\{xyy\}yu\}
=
\{\{xyu\}vz\}
=
\{xy\{uvz\}\}.$$
Thus $X$ is a left $XX^{-1}$-module.
A dual argument shows that $X$ is a right $X^{-1}X$-module.

To show that it is a bimodule we calculate
$(xy^{-1} \cdot z) \cdot u^{-1}v$
and
$xy^{-1} \cdot (z \cdot u^{-1}v)$.
But these are equal by (A2).

(MC1) holds. We calculate $\langle xy^{-1} \cdot u,v \rangle$ and $xy^{-1}\langle u, v \rangle$.
Now 
$$\langle xy^{-1} \cdot u,v \rangle
=
\langle \{xyu\}, v \rangle
=
\{\{xyu\}vv\}\{v\{xyu\}\{xyu\}\}^{-1}$$
and
$$xy^{-1}\langle u, v \rangle
=
xy^{-1} \otimes \{uvv\}\{vuu\}^{-1}
=
\{xy\{uvv\}\}\{\{vuu\}\{uvv\}\{yy\{uvv\}\}\}^{-1}.$$
To show that these two elements are equal, we need to show that the `negative' parts of these two expressions are equal.

We have that
$$\{\{vuu\}\{uvv\}\{yy\{uvv\}\}\}
=
\{(v \bullet u)(u \bullet v)(y \circ (u \bullet v))\}
=
\{ ((v \bullet u) \bullet v)u(y \circ (u \bullet v)) \}$$
where we have used the fact, Lemma~3.2, that $X^{\bullet}$ is a left normal band,
and this is equal to
 $$
\{(v \bullet u)u(y \circ (u \bullet v)\}
=
\{\{vuy\}yu\}\bullet v.$$

We also have that
$$v \bullet \{xyu \} = v \bullet v \bullet \{xyu \} = v \bullet \{xyu\} \bullet v$$
again using the fact that $X^{\bullet}$ is a left normal band.
This is just
$$\{ \{v\{xyu\}\{xyu\}\} vv\}.$$
But
$$\{v\{xyu\}\{xyu\}\}
=
\{vu\{yx\{xyu\}\}\}
=
\{vu\{\{yxx\}yu\}\}
=
\{vu\{yyu\}\}
=
\{\{vuy\}yu\}$$
using the fact that $x = \{yyx\}$.
Thus
$$\{v\{xyu\}\{xyu\}\}
=
\{\{\{vuy\}yu\}vv\}.$$
We have therefore shown that (MC1) holds.

The fact that (MC4) holds follows by a dual argument.
\end{proof}

Combining the above results we get the following.

\begin{theorem} With each generalized heap $(X,\{\})$ we can associate an equivalence bimodule
$$(XX^{-1},X^{-1}X,X,\langle -,-\rangle,[-,-]).$$
\end{theorem}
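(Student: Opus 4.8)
The plan is to verify directly that the tuple $(XX^{-1},X^{-1}X,X,\langle -,-\rangle,[-,-])$ satisfies every clause of the definition of an equivalence bimodule, reading off each required property from results already established in this section. No new construction is needed; the work is one of assembly, so I would organise the argument as a short checklist citing the appropriate earlier statement for each ingredient.

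First I would record that the two ambient structures are inverse semigroups: this is the content of Proposition~3.6, which exhibits $XX^{-1}$ and $X^{-1}X$ as inductive groupoids and hence, via the Ehresmann-Schein-Nambooripad theorem, as inverse semigroups, with semilattices of idempotents $E$ and $F$ respectively. Next I would invoke Lemma~3.7: its parts (1) and (2) give that $\langle -,-\rangle \colon X \times X \to XX^{-1}$ and $[-,-]\colon X \times X \to X^{-1}X$ are well-defined and surjective, its parts (3) and (4) give that the two actions $XX^{-1}\times X \to X$ and $X \times X^{-1}X \to X$ are well-defined, and its part (5) supplies axioms (MC2), (MC3), (MC5), (MC6) and (MC7). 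Finally I would cite Proposition~3.8, which establishes that $X$ is an $(XX^{-1},X^{-1}X)$-bimodule and that the two remaining axioms (MC1) and (MC4) hold. Collecting these facts yields all of (MC1)--(MC7) together with the structural hypotheses, completing the verification.

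The one point I would take care over is the unitarity of the two actions, since being a bimodule in the sense of this paper requires $(XX^{-1})X = X$ and $X(X^{-1}X) = X$, and this is the clause not made explicit in the associativity computations of Proposition~3.8. It is closed immediately by axiom (A1): for any $z \in X$ the element $zz^{-1}$ lies in $XX^{-1}$ since $q(z) = q(z)$, and $zz^{-1}\cdot z = \{zzz\} = z$, so $(XX^{-1})X = X$; dually $z\cdot z^{-1}z = \{zzz\} = z$ gives $X(X^{-1}X) = X$. Beyond this the only remaining risk is bookkeeping, namely keeping the inverse semigroup acting on the left equal to $XX^{-1}$ (idempotents $E$) and the one acting on the right equal to $X^{-1}X$ (idempotents $F$), matched respectively to the pairings $\langle -,-\rangle$ and $[-,-]$; I would state this identification explicitly before concluding.
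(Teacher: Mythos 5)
Your proposal is correct and matches the paper's approach exactly: the paper proves this theorem simply by combining Proposition~3.6, Lemma~3.7 and Proposition~3.8, which is precisely your checklist. Your explicit verification of unitarity via (A1) (namely $zz^{-1}\cdot z = \{zzz\} = z$ and its dual) is a point the paper leaves implicit, so including it only strengthens the argument.
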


\section{Back and forth}

It remains to show that the two constructions we have described are essentially inverses of each other.
The following lemma is part of Proposition~2.3 of \cite{S}.

\begin{lemma} In an equivalence bimodule, the following hold.
\begin{enumerate}

\item $[x,y][z,w] = [x,  \langle y, z \rangle w ]$.

\item $[xt,y] = t^{-1}[x,y]$.

\item $[sx,y] = [x,s^{-1}y]$, $[x,sy] = [s^{-1}x,y]$.

\end{enumerate}
\end{lemma}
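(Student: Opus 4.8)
The plan is to prove the three identities in the Lemma using only the seven axioms (MC1)--(MC7), together with the basic facts already established in Lemma~2.1, namely that $\langle x,x\rangle$ and $[x,x]$ are idempotents and that idempotents commute in the inverse semigroups $S$ and $T$. The crucial structural identity I will lean on throughout is (MC7), $\langle x,y\rangle z = x[y,z]$, since it is the only axiom that relates the two inner products and thus lets products on the $S$-side be converted into products on the $T$-side and conversely. My guiding heuristic is that each claimed identity expresses a kind of ``associativity'' of the bracket operations, and (MC7) is precisely the bridge that makes such reassociation possible.

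\medskip

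\noindent\textbf{Part (1).} To prove $[x,y][z,w] = [x,\langle y,z\rangle w]$, I would start from the right-hand side and use (MC7) inside the second slot: since $\langle y,z\rangle w = y[z,w]$ by (MC7), the right-hand side becomes $[x, y[z,w]]$. Now I would apply (MC4), which states $[x,yt] = [x,y]t$, with $t = [z,w] \in T$, to obtain $[x,y][z,w]$, exactly the left-hand side. So Part~(1) is essentially an immediate consequence of (MC7) followed by (MC4), and I expect it to be the easiest of the three.

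\medskip

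\noindent\textbf{Part (2).} For $[xt,y] = t^{-1}[x,y]$, I would use (MC5), $[a,b] = [b,a]^{-1}$, to move the scalar to a slot where (MC4) applies: writing $[xt,y] = [y,xt]^{-1}$ and then applying (MC4) in the form $[y,xt] = [y,x]t$, I get $[xt,y] = ([y,x]t)^{-1} = t^{-1}[y,x]^{-1} = t^{-1}[x,y]$, where the last step uses (MC5) again and the fact that inversion in the inverse semigroup $T$ reverses products. This is a clean symmetry argument, trading the left action in the first slot for a right action in the second via (MC5).

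\medskip

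\noindent\textbf{Part (3).} The two identities here mix the $S$-action with the $T$-bracket and are where I expect the real work to lie, since they have no single axiom directly relating a left $S$-action inside a $T$-bracket. My strategy would be to exploit (MC7) to convert the $S$-action into a $T$-bracket. For $[sx,y]$, the difficulty is that $s$ need not be of the form $\langle\,\cdot\,,\cdot\,\rangle$ applied to convenient arguments, so I would first use surjectivity of $\langle -,-\rangle$ to write $s = \langle a,b\rangle$ for some $a,b\in X$, reducing $sx$ to $\langle a,b\rangle x = a[b,x]$ by (MC7); then I would push the resulting $T$-element out of the first slot using the already-proved Part~(2) and reassemble using Part~(1). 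The main obstacle will be bookkeeping: keeping the idempotent ``domain'' factors straight so that the $s$ and $s^{-1}$ on the two sides match exactly, rather than merely up to multiplication by an idempotent. I anticipate needing (MC6) (that $x[x,x]=x$) or Lemma~2.1(1) to absorb such idempotent discrepancies, and the second identity $[x,sy] = [s^{-1}x,y]$ should then follow either by the same method or by applying (MC5) to the first identity and renaming.
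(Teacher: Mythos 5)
Your proposal is correct, but there is no proof in the paper to compare it against: the paper states this lemma without proof, citing Proposition~2.3 of Steinberg's preprint \cite{S}. Your parts (1) and (2) are complete and exactly right: (MC7) followed by (MC4) gives (1), and (MC5) together with the fact that inversion reverses products gives (2). For part (3), which you only sketched, your plan in fact closes without any of the idempotent bookkeeping you anticipate: writing $s = \langle a,b\rangle$ by surjectivity, (MC7) gives $sx = a[b,x]$; your part (2) and (MC5) give $[sx,y] = [b,x]^{-1}[a,y] = [x,b][a,y]$; your part (1) turns this into $[x,\langle b,a\rangle y]$; and (MC2) identifies $\langle b,a\rangle = \langle a,b\rangle^{-1} = s^{-1}$, so $[sx,y] = [x,s^{-1}y]$ exactly --- no appeal to (MC6) or absorption of idempotent factors is needed. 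The companion identity $[x,sy] = [s^{-1}x,y]$ is then just the first identity with $s^{-1}$ in place of $s$. In short, your argument is a correct, self-contained derivation from the axioms of a result the paper delegates to an external reference; the only inaccuracy in your write-up is the unwarranted pessimism about part (3).
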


Let $(X,\{ \})$ be a generalized heap.
By Theorem~3.9, there is an equivalence bimodule constructed from it
and by Proposition~2.2 there is, in turn, a heap constructed from this bimodule.
The underlying set of this heap is also $X$ and its ternary operation is 
$$\{\{xyy\}\{yxx\}z\}.$$
But by applying the axioms of a generalized heap this quickly simplifies to $\{xyz\}$.
We have therefore shown that if we start with a generalized heap, 
construct the corresponding equivalence bimodule, 
and then construct the generalized heap from that,
we arrive back where we started.
Thus we need only prove the following.

\begin{proposition}
Let
$(S,T,X,\langle -,-\rangle,[-,-])$
be an equivalence bimodule and let
$(XX^{-1},X^{-1}X,X,\langle -,- \rangle_{1},[-,-]_{1})$.
be the equivalence bimodule that arises after successively applying our two constructions.
Then the two equivalence bimodules are isomorphic.
\end{proposition}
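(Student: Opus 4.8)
The plan is to produce the isomorphism using the identity map on the common underlying set $X$, together with two inverse semigroup isomorphisms $\alpha \colon XX^{-1} \to S$ and $\beta \colon X^{-1}X \to T$ defined on representatives by $\alpha(xy^{-1}) = \langle x,y \rangle$ and $\beta(x^{-1}y) = [x,y]$. Since neither construction alters the set $X$, and since the heap operation on $X$ induced by the original bimodule is $\{xyz\} = \langle x,y \rangle z$ by Proposition~2.2, it will be enough to show that $\alpha$ and $\beta$ are well-defined bijective homomorphisms intertwining the two sets of structure maps and actions. By symmetry I would treat only $\alpha$ in detail.

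First I would set up a short dictionary between the data $p,q$ of the heap and the idempotents of $S,T$. The key facts are: (a) $p(x)=p(y) \Leftrightarrow \langle x,x \rangle = \langle y,y \rangle$, since $p(x)=p(y)$ unwinds to $y = \langle x,x\rangle y$ and $x = \langle y,y\rangle x$ in $X^{\circ}$, and applying $\langle -,- \rangle$ with (MC1) and the commuting of idempotents forces $\langle x,x\rangle = \langle y,y\rangle$, the converse being (MC3); dually $q(x)=q(y) \Leftrightarrow [x,x]=[y,y]$; and (b) whenever $q(x)=q(y)$ one has $\langle x,y \rangle y = x$, because $\langle x,y \rangle y = x[y,y] = x[x,x] = x$ by (MC7) and (MC6). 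Since the representatives $(x,y)$ of elements of $XX^{-1}$ satisfy $q(x)=q(y)$, fact (b) gives, via (MC1) and (MC2), the range and domain idempotents $\langle x,y\rangle\langle y,x\rangle = \langle \langle x,y\rangle y, x\rangle = \langle x,x \rangle$ and $\langle y,x\rangle\langle x,y\rangle = \langle y,y\rangle$. I would also record the right-hand rule $\langle a,sb \rangle = \langle a,b \rangle s^{-1}$, a one-line consequence of (MC1) and (MC2).

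With this dictionary the well-definedness, injectivity and surjectivity of $\alpha$ become routine. The equivalence defining $XX^{-1}$ is the dual of that computed in Lemma~3.4, namely $(x,y)\equiv(u,v)$ iff $p(x)=p(u)$, $p(y)=p(v)$ and $x=\{uvy\}$. Given such data, $x = \langle u,v\rangle y$ and $\langle y,y\rangle = \langle v,v\rangle$, so $\langle x,y\rangle = \langle u,v\rangle\langle y,y\rangle = \langle u,v\rangle\langle v,v\rangle = \langle u,v\rangle$ by (MC1), the right-hand rule and (MC3), which is well-definedness. Conversely, equating the range and domain idempotents of $\langle x,y\rangle = \langle u,v\rangle$ yields $p(x)=p(u)$ and $p(y)=p(v)$, while $x = \langle x,y\rangle y = \langle u,v\rangle y = \{uvy\}$, giving injectivity. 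Surjectivity, and simultaneously the structure-map compatibility $\alpha(\langle x,y\rangle_{1}) = \langle x,y\rangle$, would fall out of the single identity $\alpha((x\bullet y)(y\bullet x)^{-1}) = \langle x\bullet y, y\bullet x\rangle = \langle x,y\rangle$ combined with the surjectivity of $\langle -,-\rangle$ onto $S$ (Lemma~3.7).

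The step I expect to be the main obstacle is verifying that $\alpha$ is a homomorphism, since it forces the idempotent bookkeeping to come out exactly right. Feeding the pseudoproduct formula of Proposition~3.6 into $\alpha$ gives $\alpha(xy^{-1}\otimes uv^{-1}) = \langle \{xyu\}, \{vu\{yyu\}\}\rangle = \langle \langle x,y\rangle u, \langle v,u\rangle\langle y,y\rangle u\rangle$, and (MC1) together with the right-hand rule and (MC2) should reduce this to $\langle x,y\rangle\langle u,u\rangle\langle y,y\rangle\langle u,v\rangle$; since idempotents commute and $\langle y,y\rangle$, $\langle u,u\rangle$ are the domain idempotent of $\langle x,y\rangle$ and the range idempotent of $\langle u,v\rangle$, this collapses to $\langle x,y\rangle\langle u,v\rangle$. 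The map $\beta$ is handled dually. The actions then match on the nose: $xy^{-1}\cdot z = \{xyz\} = \langle x,y\rangle z = \alpha(xy^{-1})\cdot z$, and $z\cdot y^{-1}z' = \{zyz'\} = z[y,z'] = z\cdot\beta(y^{-1}z')$ by (MC7). Hence $(\alpha,\beta,\mathrm{id}_{X})$ is the desired isomorphism of equivalence bimodules, and the only genuinely delicate point is the idempotent cancellation in the homomorphism computation.
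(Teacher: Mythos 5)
Your proposal is correct and takes essentially the same approach as the paper: the isomorphism is the same pair of maps $xy^{-1}\mapsto\langle x,y\rangle$, $x^{-1}y\mapsto[x,y]$ together with the identity on $X$, verified through the same checklist (well-definedness, injectivity, surjectivity, homomorphism, compatibility of actions and structure maps), and your computations — including the idempotent cancellation in the homomorphism step — do go through. The only differences are cosmetic: you work the $\langle-,-\rangle$/$XX^{-1}$ side in detail where the paper works the $[-,-]$/$X^{-1}X$ side (each defers to duality for the other), and you derive the needed identities directly from (MC1)--(MC7) where the paper invokes Lemma~4.1, quoted from Steinberg; your folding of surjectivity into the structure-map compatibility $\alpha(\langle x,y\rangle_{1})=\langle x,y\rangle$ is a small economy the paper achieves by doing the same computation twice.
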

\begin{proof}
We show first that $X^{-1}X$ is isomorphic to $T$.

Define a map $X^{-1}X \rightarrow S$ by $x^{-1}y \mapsto [x,y]$.
This map is well-defined,
for suppose that $x^{-1}y = u^{-1}v$.
Then 
$y = \{xuv\} = \langle x,u \rangle v$
and
$u = \{uxx\} = \langle u. x \rangle x$.
We calculate
$$[x,y]
= 
[x, \langle x,u \rangle v]
=
[\langle u, x \rangle x,v]
=
[u,v]$$
using Lemma~4.1(3).

Next we show that this map is injective.
Let $x^{-1}y,u^{-1}v \in X^{-1}X$ and suppose that $[x,y] = [u,v]$.
Then 
$$y 
= \langle x, x \rangle y
= x[x,y]
= x[u,v]
= \langle x,u \rangle v.$$
Thus $y = \{xuv\}$.
Next we show that $q(x) = q(u)$ and $q(y) = q(v)$.
We have that
$$x 
= \langle y, y \rangle x
= y[y,x]
= y[v,u]$$ 
because $[y,x] = [v,u]$.
But $[v,u][u,u] = [v,u]$ by (MC2).
Thus $x[u,u] = x$ by Lemma~4.1(1).
Hence $x = \{xuu\}$.
We may similarly show that $u = \{uxx\}$.
Thus $q(x) = q(u)$.
A similar argument shows that $q(y) = q(v)$.
Thus the map is injective.

We now show that the map is surjective.
Let $s \in S$.
Then by assumption there exists $(x,y) \in X \times X$ such that $[x,y] = s$.
Observe that
$$p(\{yyx\}) = p(\{xxy\}).$$
Thus
$\{yyx\}^{-1}\{xxy\} \in X^{-1}X$.
Then
$$
[\{yyx\},\{xxy\}]
= [\langle y,y \rangle x, \langle x, x \rangle y]
= [x,y] = s
$$ 
by Lemma~4.1(2).

It remains to show that this function is a homomorphism.
By definition
$$x^{-1}y \otimes u^{-1}v 
=
\{\{yuu\}yx\}^{-1} \{yuv \}.$$
This maps to the element
$$[\{\{yuu\}yx\},\{yuv \}]$$
which is just
$$[ \langle  \langle y,u \rangle u   , y \rangle x, \langle y, u \rangle v ].$$
But
$$\langle y, u \rangle v = y [u,v].$$
Thus the above gives
$$[ \langle  \langle y,u \rangle u   , y \rangle x, y][u,v].$$
But 
$$[ \langle  \langle y,u \rangle u   , y \rangle x, y]
=
[x, \langle y, \langle y,u\rangle u \rangle y ]
=
[x, y[\langle y,u \rangle u,y]]                                        
=
[x,y][\langle y,u \rangle u,y]$$
which is equal to
$$
[x,y][y[u,u],y]
=
[x,y][u,u][y,y],$$
using Lemma~4.1(2).
We have therefore proved that the image of
$x^{-1}y \otimes u^{-1}v$ is equal to $[x,y][u,v]$
as required.

Thus
$\alpha \colon \: X^{-1}X \rightarrow S$ given by $\alpha (x^{-1}y) = [x,y]$ is an isomorphism of semigroups.
A dual argument shows that
$\beta \colon \: XX^{-1} \rightarrow T$ given by $\beta (xy^{-1}) = \langle x, y \rangle$ is an isomorphism of semigroups.

We now show that the actions are isomorphic.
By definition
$$x \cdot y^{-1}z = \{xyz\} = \langle x, y \rangle z = x[y,z] = x \alpha (y^{-1}z).$$
A dual argument holds for the action of $XX^{-1}$ on $X$.

Finally, we compare $[-,-]_{1}$ and $[-,-]$.
By definition
$$[x,y]_{1} 
= \{yyx\}^{-1}\{xxy\} 
= [\{yyx\},\{xxy\}]
= [\langle y,y \rangle x, \langle x, x \rangle y]
= [x,y]$$
 using Proposition~2.3 of \cite{S}.
The dual argument compares $\langle -,- \rangle_{1}$ and $\langle -,- \rangle$.
\end{proof}

We conclude this paper with an example.
Let $S$ be an inverse semigroup.
We have constructed an equivalence bimodule $EB(S)$ from $S$ whose corresponding
generalized heap $GH(S)$ is simply the set $S$ equipped with
the ternary operation $(x,y,z) \mapsto xy^{-1}z$.
The theory above tells us that if we start with $GH(S)$ then its aassociated equivalence bimodule will be isomorphic to $EB(S)$;
it is an interesting exercise to work through the details in this particular case:
for example, $p(x) = p(y)$ if and only if $xx^{-1} = yy^{-1}$ and, significantly, the inverse semigroups $S^{-1}S$ and $SS^{-1}$ are both isomorphic to $S$.
There is an interesting conclusion to be deduced from this example.
Suppose that we only know the set $S$ and the ternary operation $(x,y,z) \mapsto xy^{-1}z$, can we recover $S$?
The answer from the constructions of this paper is `yes'.


\bibliographystyle{plain}

\begin{thebibliography}{99}

\bibitem{BK} W.~Bertram, M.~Kinyon, Associative geometries I: torsors, linear relations and Grassmanians, arXiv:1005.3192v1.

\bibitem{F} J.~Funk, Semigroups and toposes, {\em Semigroup Forum} {\bf 75} (2007), 480--519.

\bibitem{F1} J.~Funk, B.~Steinberg, The universal covering of an inverse semigroup, {\em Appl. Categor. Struct.}, DOI 10.1007/s10485-008-9147-2.

\bibitem{FLS} J.~Funk, M.~V.~Lawson, B.~Steinberg, Charaterizations of Morita equivalent inverse semigroups, in preparation, 2010.


\bibitem{Gr} C.~Grunspan, Quantum torsors, {\em J. Pure Appl. Algebra} {\bf 184} (2003), 229--255.

\bibitem{HS} I.~Hawthorn, T.~Stokes, Radical decompositions of semiheaps, {\em Comment. Math. Univ. Carolin.} {\bf 50} (2009), 191--208.

\bibitem{H} J.~M.~Howie, {\em Fundamentals of semigroup theory}, Clarendon Press, Oxford, 1995.

\bibitem{AK1} A.~Kock, The algebraic theory of moving frames, {\em Cahiers de Top. et G\'eom. Diff. Cat\'egoriques} {\bf 23} (1982), 347--362.

\bibitem{AK2} A.~Kock, Principal bundles, groupoids, and connections,
in {\em Geometry and Topology of Manifolds} (Proceedings of the Conference ``The
Mathematical Legacy of Charles Ehresmann'', Bedlewo, Poland, May 2005)
Banach Center Publications Volume 76 (Warszawa 2007), 185-200.

\bibitem{L2} M.~V.~Lawson, {\em Inverse semigroups}, World-Scientific, 1998.

\bibitem{L4} M.~V.~Lawson, B.~Steinberg, Ordered groupoids and etendues, {\em  Cahiers de Top. et G\'eom. Diff. Cat\'egoriques}
{\bf 45} (2004), 82--108.

\bibitem{P} A.~L.~T.~Paterson, {\em Groupoids, inverse semigroups, and their operator algebras}, Birkh\"auser, 1999.

\bibitem{BS1} B.~M.~Schein, On the theory of generalized groups and generalized heaps, in {\em Theory of Semigroups and Appl.~I},  
Izdat. Saratov. Univ., Saratov. (1965), 286--324 (in Russian). 

\bibitem{BS} B.~M.~Schein, Relation algebras and function semigroups, {\em Semigroup Forum} {\bf 1} (1970), 1--62.

\bibitem{BS2} B.~M.~Schein, On the theory of inverse semigroups and generalized grouds, in {\em Twelve papers in logic and algebra},
AMS Translations, Ser. 2, {\bf 113} (1979), 89--123.

\bibitem{SS} S.~Sribala, M.~L.~Santiago, Embedding a semiheap in a semigroup with involution, {\em Bull. Calcutta Math. Soc.} {\bf 81} (1989), 10--16.

\bibitem{S} B.~Steinberg, Strong Morita equivalence of inverse semigroups, Preprint, arXiv:0901.2696, 2009.
Accepted by {\em Houston Math. Journal.}

\bibitem{W1} V.~V.~Vagner, Ternary algebraic operations in the theory of coordinate structures,
{\em Doklady Akad. Nauk SSSR} {\bf 81} (1951), 981--984 (in Russian).

\bibitem{W2} V.~V.~Vagner, The theory of generalized heaps and generalized groups,
{\em Mat. Sbornik N.S.} {\bf 32(74)}, (1953), 545--632 (in Russian).

\bibitem{W3} V.~V.~Vagner, Semigroups associated with generalized grouds, {\em Mat. Sb.(N.S.)}
{\bf 52} (1960), 597--628 (in Russian).

\bibitem{W4} V.~V.~Vagner, On  the algebaraic theory of coordinate atlases, {\em Trudy Sem. Vektor. Tenzor. Anal.}
{\bf 13} (1966), 510--563 (in Russian).

\end{thebibliography}

\end{document}